\documentclass[12pt]{article}
\usepackage{pgf,tikz}
\usepackage{calc,graphicx, complexity}
\usepackage{graphics}
\everymath{\displaystyle}
\usepackage{graphicx}
\usepackage{color}
\usepackage{amssymb}
\usepackage{amsmath}
\newtheorem{prethm}{{\bf Theorem}}

\newenvironment{thm}{\begin{prethm}{\hspace{-0.5
				em}{\bf .}}}{\end{prethm}}
\newtheorem{prelemma}{{\bf Lemma}}

\newenvironment{lemma}{\begin{prelemma}{\hspace{-0.5
				em}{\bf .}}}{\end{prelemma}}
\newtheorem{preex}{{\bf Example}}

\newtheorem{preprop}{{\bf Proposition}}

\newenvironment{prop}{\begin{preprop}{\hspace{-0.5em}{\bf .}}}{\end{preprop}}
\newtheorem{precor}{{\bf Corollary}}

\newenvironment{cor}{\begin{precor}{\hspace{-0.5
				em}{\bf .}}}{\end{precor}}
\newtheorem{preremark}{{\bf Remark}}

\newtheorem{preprob}{{\bf Problem}}

\newtheorem{predefin}{{\bf Definition}}

\newenvironment{defin}{\begin{predefin}{\hspace{-0.5
				em}{\bf .}}}{\end{predefin}}
\newtheorem{preconj}{{\bf Conjecture}}

\newtheorem{preprobb}{{\bf Problem}}

\newtheorem{prelem}{{\bf Theorem}}

\newtheorem{precla}{{\bf Claim}}

\newenvironment{proof}{{\bf Proof.}\rm }{\hfill{$\Box$}}

\newtheorem{presolution}{{\bf Solution.}}

\def\newpic#1{}
\def\qed{\ifhmode\unskip\nobreak\fi\quad\ifmmode\Box\else$\Box$\fi}

\title{\vspace{-1.5cm}\Large\bf\noindent Computing welfare and fairness in allocating identical goods with entitlements and general utility functions}
\author{\large\bf Manouchehr Zaker\footnote{mzaker@iasbs.ac.ir}
\vspace{5mm}\\
Department of Mathematics,\\
Institute for Advanced Studies in Basic Sciences,\\
Zanjan 45137-66731, Iran\\
}

\date{}

\begin{document}
\maketitle

\begin{abstract}
\noindent A number of goods are called identical if they provide the same level of utility to each agent. In various real-world instances of fair division scenarios, identical indivisible items are allocated to consumers and demandants with different entitlements. We assume that the utility of $t$ identical items to each agent $A$ equals $f_A(t)$, where $f_A$ is an arbitrary increasing function corresponding to $A$. We present a polynomial time algorithm that determines the maximum weighted Rawlsian and Leximin welfare for scenarios with identical goods and show that the allocation obtained by the algorithm is equitable up to any item (WEQX). Some results concerning restricted utilitarian welfare and existence of WMMS and WEFX allocations are also presented. We introduce a new quantity ``total weighted deficit," for allocations, and by which we obtain a tractable algorithm to achieve equitable allocations for scenarios with identical goods and different weights via compensation by using a minimum number of identical coins. Some result are for scenarios with $k$ goods of different types.
\end{abstract}

\noindent {\bf Keywords:} Fair division; Indivisible goods; Welfare functions; identical goods

\noindent {\bf Mathematics Subject Classification:} 91B32, 90C10

\section{Introduction and related literature}

\noindent The theory of fair division/allocation is a rapidly growing and rigorous interdisciplinary framework spanning mathematics, economy, game theory and artificial intelligence, dedicated to the optimal allocation of resources among agents with competing and subjective preferences. The discipline seeks to formalize intuitive and often philosophical concepts of fairness by establishing precise criteria for resource allocation. In addition, some basic issues from social choice theory such as welfare functions have been used in the theory which quantify welfare of each allocation. The theory generally bifurcates into study of divisible goods and indivisible goods viz discrete items that must be allocated whole. We refer to book chapter \cite{BCM} as a reference for fair allocation of indivisible goods and to \cite{CKMPSW} for a recent survey and open problems on this subject. The set of non-negative real numbers and $\{1, \ldots, k\}$ are denoted respectively by $\mathbb{R}^{\geq 0}$ and $[k]$, where $k$ is a positive integer. Let $A=\{A_1, \ldots, A_n\}$ and $G=\{g_1, \ldots, g_m\}$ be sets of agents and indivisible goods, respectively. A utility function $u_i$ corresponding to $A_i$ is a mapping from the power set of $G$ i.e. $2^G$ to $\mathbb{R}^{\geq 0}$. Sets $A$ and $G$ together with utility functions form a division/allocation scenario. If $u_i(T)={\sum}_{g\in T} u_i(g)$ for each subset $T\subseteq G$ then $u_i$ is called additive utility function.

\noindent Given agents $A_1, \ldots, A_n$ with their utility functions, a number of goods say $h_1, \ldots, h_m$ are called identical if for each $i$, $u_i(h_r)=u_i(h_s)$ for each $1\leq r,s \leq m$. In this paper we consider a special case of fair division problem in which $m$ identical goods (in particular $m$ copies of a single indivisible good) are to be allocated to the agents. We say two indivisible goods $g$ and $g'$ are non-identical if there exists an agent $A$ such that $f_A(g)\not= f_A(g')$, where $f_A$ is the utility function of $A$. We say two non-identical items have different types. Suppose that we have $k$ mutually non-identical goods $g_1, \ldots, g_k$ (i.e. $k$ items with different types) and the task is to allocate $m_i$ copies of $g_i$ to the agents, where $m_i$'s are independent of each other. Hence, the goods are called independent since distribution of $g_r$ does not effect distribution of $g_s$. Denote $\frak{m}=(m_1, \ldots, m_k)$. For $i\in [n]$ and $j\in [k]$, $A_i$ has entitlement $w_{ij}$ in the process of allocating $m_j$ identical copies of $g_j$ to the agents. An entitlement $w_{ij}$ in the division process, quantifies the measure of merit, need or demand the agent $A_i$ entitles in the process of allocation of identical copies of $g_j$. In our model we consider general i.e. not necessarily additive utility functions for the agents. For each $i$ and $j$ there exists an increasing function $f_{ij}$ such that a bundle consisting of $x_{ij}$ copies of $g_j$ has utility $f_{ij}(x_{ij})$ for $A_i$. Since we allocate goods and not chores then the utility of $x$ copies of a good $g$ is strictly greater than $y$ copies of $g$ for each agent. Therefore, we assume that utility functions are increasing i.e. $f(x)>f(y)$ whenever $x>y$ and also computable. It follows that the utility functions are invertible. Also $f_{ij}(1)$ is the net value of single good $g_j$ for $A_i$. Throughout we paper we assume that the utility functions are reported truthfully. There are many methods which prevent agents from misreporting. For example, they do not know if allocations are obtained by maximizing utilitarian or Rawlsian welfare, to be defined precisely later. We denote this kind of scenario as $(k;F,W)$-scenario, where $W=[w_{ij}]$ and $F=[f_{ij}]$ are the $n\times k$ matrices of weights and utility functions. A typical allocation corresponding to $\frak{m}$ in $(k;F,W)$-scenario is an $n\times k$ matrix $X=(x_{ij})$ in which $x_{ij}$ copies of $g_j$ are allocated to $A_i$ such that ${\sum}_i x_{ij}=m_j$, for each $j\in [k]$. Notation $k$ means that there are $k$ distinct types in the scenario.


\noindent The scenarios $(k;F,W)$ in this paper in particular $(1;F,W)$-scenarios have several real world instances we briefly explain some of them. In the so-called Political Apportionment the problem is how to distribute a fixed number of seats (identical and indivisible goods) in a legislature among provinces or political parties based on population or political parties vote share (viz agents with different entitlements). Apportionment problem has been widely investigated in the literature e.g. \cite{BY}. Various methods in apportionment theory were explained in \cite{SPDD}. Utilities in common studies of apportionment theory are usually additive but real world setting of the theory needs utility functions satisfying diminishing marginal utility. This postulate states that the first unit of consumption of a good yields more satisfaction or utility than the subsequent units, and there is a continuing reduction in satisfaction or utility for greater amounts. As consumption increases, the additional satisfaction or utility gained from each additional unit consumed falls, a concept known as diminishing marginal utility. The reason why utilities in the instances of apportionment theory satisfy the postulate is that the benefit of gaining one additional seat is highest when a party has 0 seats (gaining representation). Going from 1 to 2 seats is a significant increase in power, whereas going from 100 to 101 seats offers a much smaller marginal increase in relative power or representation. The diminishing marginal utility occurs in the next examples too with a similar argument. Identical goods are also appeared in Cloud Computing Resource Allocation. In distributed systems, a central scheduler must allocate identical computing resources such as identical CPU cores, virtual machine instances, or fixed blocks of RAM to different applications, processes, or client requests or users to maximize overall system performance. Amdahl’s Law \cite{ZLL} asserts that the first few processors allocated to a task significantly speed it up. However, as you add more processors to the same task, the speedup eventually plateaus due to the serial portion of the code. The marginal gain of the 100th CPU core is far less than the marginal gain of the 2nd \cite{ZLL}. Another real world instance of our allocation model is Supply Chain and Inventory Distribution. When a central warehouse has a limited stock of a specific item and needs to distribute it to regional retail stores or distribution centers. The first few units sent to a store are highly valuable because they meet the immediate, high-probability demand. However, sending 500 units to a small rural store yields diminishing returns because the extra units will simply sit in inventory \cite{AG}. As a last but not least example, we mention Vaccine/Supply Allocation in healthcare triage an important issue studied e.g. in \cite{ML}. During a shortage (like a pandemic), health authorities must allocate limited identical supplies to different regions or hospitals possibly with different needs and urgencies. The first batch of vaccines allocated to a region has the highest utility (saving the most vulnerable/high-risk individuals). To study welfare and fairness in these allocation problems we need to investigate $(1;F,W)$-scenarios for suitable $F$ and $W$.

\noindent {\bf The outline of results is as follows.} Rest of this section is devoted to formal definitions and related literature. Corollary \ref{greedy-util} and Theorem \ref{R1item} present polynomial time algorithms to obtain allocations with maximum weighted utilitarian welfare and maximum weighted Rawlsian welfare of $(1;F,W)$-scenarios. Propositions \ref{weqx} and \ref{leximin} provide WEQX+PO and weighted Leximin allocations in polynomial time, respectively. We prove that if utility functions in a $(1;F,W)$-scenario are quasi-multiplicative then the scenario admits a WEFX allocation. To compute allocations with (weighted) maximin share fairness is a polynomial time problem in Proposition \ref{mms}. We introduce ``total weighted deficit" (shortly {\sf twd}) for allocations in general division scenarios. Theorem \ref{polypsi} provides a polynomial time algorithm for computing an allocation maximizing {\sf twd} in a given $(1;F,W)$-scenario. Using the latter invariant, we explain in Proposition \ref{aug} how to obtain $WEQ$ allocations using a minimum number of identical coins offered as compensation. At the end, Theorem \ref{polypsi} is generalized for $(k;F,W)$-scenarios in Proposition \ref{k-polypsi}.


\noindent Let $S$ be a general fair division scenario with $m$ distinct goods $G=\{g_1, \ldots, g_m\}$ and utility functions $u_i$ corresponding to agent $A_i$. An allocation of goods $G$ to the agents corresponds to a partition of $G$ into $n$ bundles $S_1, \ldots, S_n$, so that $S_i$ consists of the items allocated to agent $A_i$. The utility of $S_i$ for $A_i$ is $u_i(S_i)$. A utility function $u_i$ is additive if for each $S\subseteq G$, $u_i(S) ={\sum}_{g \in S} u_i(g)$. There are two main types of fairness criteria widely studied in theories of fair division and resource allocation. The first one is envy-freeness (EF) which is intrapersonal fairness criterion. Existence of EF is not guaranteed in general. There are two main relaxations of EF, one is called EF1 and defined by Budish \cite{Bu} and the other one is EFX studied by Caragiannis et al. in \cite{CKMPSW}. The other fairness notion is an interpersonal criterion equitability (EQ) which has a long history in fairness theory. EQ is not guaranteed to exist in general. Its variants EQ1 and EQX are studied by Freeman et al. in \cite{FSVX}. These notions are defined below. Let $B: \{g_1, \ldots, g_m\}\rightarrow \{A_1, \ldots, A_n\}$ be an allocation for a scenario $S$. Then $B$ offers the bundle $B^{-1}(A_i)$ to $A_i$. We sometimes use $B_i$ instead of $B^{-1}(A_i)$ when the agent $A_i$ is clear. The value of bundle $B^{-1}(A_i)$ for $A_i$ is $u_{A_i}(B^{-1}(A_i))$. By these notation we have the following definition.

\begin{defin}
\noindent Let $B: \{g_1, \ldots, g_m\}\rightarrow \{A_1, \ldots, A_n\}$ be an allocation for a general scenario $S$ with entitlement vector $(w_1, \ldots, w_n)$. Then $B$ is called

\noindent WEF (weighed envy-free) if for each $A_i$ and $A_j$, $u_{A_i}(B_i)/w_i \geq u_{A_i}(B_j)/w_j$,

\noindent WEF1 (weighed envy-free up to one item) if for each $A_i$ and $A_j$ there exists a good $g\in S(A_j)$ with $u_{A_j}(g)\not= 0$ such that $u_{A_i}(B_i)/w_i \geq u_{A_i}(B_j)\setminus \{g\})/w_j)$,

\noindent WEFX (weighted envy-free up to any good) if for each $A_i$ and $A_j$ and for any $g\in B_j$ with $u_{A_j}(g)\not= 0$ one has $u_{A_i}(B_i)/w_i \geq u_{A_i}(B_j)\setminus \{g\})/w_j$).

\noindent WEQ (weighted equitable) if for each $A_i$ and $A_j$, $u_{A_i}(B_i)/w_i \geq u_{A_j}(B_j)/w_j$.

\noindent WEQX (weighted equitable up to any good) if for each agents $A_i$ and $A_j$ and for any $g\in B_j$ with $u_{A_j}(g)\not= 0$ one has $u_{A_i}(B_i)/w_i \geq u_{A_j}(B_j)\setminus \{g\})/w_j$).

\noindent WMMS (weighted maximin share fair allocation) if for each $i$, $u_{A_i}(B_i)\geq WMMS(A_i)$, where $WMMS(A_i)={\max}_{D} {\min}_j (w_i/w_j)u_{A_i}(D_j)$, where the maximum is over all allocations $D$ of $g_1, \ldots, g_m$ to the agents and $D_j=D^{-1}(A_j)$.
\end{defin}

\noindent Social welfare functions are also applied for fair division scenarios. Two major welfare functions are the utilitarian welfare which is based on the Jeremy Bentham's thesis that ``we should aim at the greatest happiness for the greatest number" \cite{B} and Rawlsian (sometimes called egalitarian) welfare which is an implication of John Rawls ``veil of ignorance" and ``difference principle" doctrine that asserts ``social and economic inequalities are to be arranged so that they are to the greatest benefit of the least disadvantaged" \cite{R}.

\begin{defin}
Given an allocation $B: \{g_1, \ldots, g_m\} \rightarrow \{A_1, \ldots, A_n\}$ for a scenario $S$, the weighted utilitarian welfare $WU(B)$ of $B$ equals to ${\sum}_{i=1}^n w_iu_i(B^{-1}(A_i))$. The weighted Rawlsian welfare $WR(B)$ of $B$ equals ${\min}_{i} u_i(B^{-1}(A_i))/w_i$. An allocation $B$ is utilitarian (resp. Rawlsian or maximin) if $WU(B)={\max}_{B'} WU(B')$ (resp. $WR(B)={\max}_{B'} WR(B')$), where the maximum is taken over all allocations $B'$.
\end{defin}

\noindent An allocation $B$ consisting of the bundles $B_1, \ldots, B_n$ is Pareto optimal (shortly PO) if and only if there is no other allocation $B'$ such that $u_i(B'_i) \geq u_i(B_i)$ for each $i$ and $u_j(B'_j) > u_j(B_j)$ for some $j$. Leximin is a social welfare ordering which is obtained by completion of Rawlsian doctrine. A maximin allocation is not necessarily PO but any Leximin allocation is PO. We need to define a weighted version of Leximin. Let $\beta=(b_i)_{i=1}^n$ and $\beta'=(b'_i)_{i=1}^n$ be two non-decreasing sequences of real number. We say $\beta'$ is lexicographically improvement of $\beta$ (denoted by $\beta \prec \beta'$) if there exists some $k\geq 0$ such that $b_i=b'_i$ for each $i\in [k]$ and $b_{k+1}< b'_{k+1}$.

\begin{defin}
Given a scenario $S$ with entitlement vector $(w_1, \ldots, w_n)$, let $B$ and $B'$ be two allocations for $S$. Denote for simplicity $B_i=B^{-1}(A_i)$ and $B'_i=(B')^{-1}(A_i)$ for each agent $A_i$. Sort non-decreasingly the elements of $\{u_{A_i}(B_i)/w_i: i=1, \ldots, n\}$ as $u_{A_{i_1}}(B_{i_1})/w_{i_1} \leq u_{A_{i_2}}(B_{i_2})/w_{i_2} \leq \cdots \leq u_{A_{i_n}}(B_{i_n})/w_{i_n}$, and the elements of $\{u_{A_i}(B'_i)/w_i: i=1, \ldots, n\}$ as $u_{A_{j_1}}(B'_{j_1})/w_{j_1} \leq \cdots \leq u_{A_{j_n}}(B'_{j_n})/w_{j_n}$. We say $B'$ is Leximin improvement of $B$ if $$\big[\frac{u_{A_{i_1}}(B_{i_1})}{w_{i_1}}, \ldots, \frac{u_{A_{i_n}}(B_{i_n})}{w_{i_n}}\big]\prec
\big[\frac{u_{A_{j_1}}(B'_{j_1})}{w_{j_1}}, \ldots, \frac{u_{A_{j_n}}(B'_{j_n})}{w_{j_n}}\big].$$
\noindent Allocation $B$ is called weighted Leximin allocation if no allocation is Leximin improvement of $B$.
\end{defin}

\noindent A practical importance of equitability has been highlighted in an experimental study conducted by Herreiner and Puppe \cite{HP}. They asked human subjects to deliberate over an assignment of indivisible goods subject to a time limit. It was found that the chosen outcomes were equitable. The effect of the envy-freeness criterion is limited to situations in which other fairness criteria are not applicable. Sun et al. in \cite{SCD} provide a complete picture of the computational complexity of (i) deciding the existence of an EQX/EQ1 and welfare-maximizing allocation; (ii) computing a welfare maximizer among all EQX/EQ1 allocations. For the trade-off between fairness and efficiency, they investigate efficiency loss under these fairness constraints and establish the price of fairness. Computational complexity of determining maximum welfare is a hot research area and widely studied in fairness theory. Aziz et al. in \cite{AHMS} investigate the run time complexity of computing allocations that are both fair and maximize the utilitarian social welfare. Aziz et al. focus on two computational problems: (1) Among the utilitarian-maximal allocations, decide whether there exists one that is also fair; (2) among the fair allocations, compute one that maximizes the utilitarian welfare. They show that both problems are strongly $\NP$-hard when the number of agents is variable, and remain $\NP$-hard for a fixed number of agents greater than two. In \cite{G}, some hardness results and polynomial time
approximation algorithms for several variants of maximin welfare problem are presented. In \cite{FSVX}, the authors study equitable allocations of indivisible goods among agents with additive valuations. They consider equitability in conjunction with other well-studied notions of fairness and economic efficiency and show that the Leximin algorithm produces an allocation that satisfies equitability up to any good and Pareto optimality. They also give a novel algorithm that guarantees Pareto optimality and equitability up to one good in pseudopolynomial time. Maximin share fairness was introduced in \cite{Bu} and its weighted version was investigated by Farhadi et al. in \cite{FGHLPSSY}. Rawlsian welfare has practical applications. For example \cite{JWCH} studies Rawlsian principle of fairness in push and pull supply chains and proves its impact on chain performance and firms’ profits. They show that the Rawlsian principle adopted by individual firms can achieve not only fairness but also Pareto efficiency. Gorantla et al. \cite{GMV} study the problem of envy-free allocating a multiset $X=\{{g_a}^{x_a}: a\in [t]\}$ of indivisible items with additive valuations, where $x_a>0$ denotes the multiplicity of $g_a$ in the scenario. Igarashi et al. in \cite{ILNN} initiated the study of the repeated fair division of indivisible goods and chores, and propose a formal model for this scenario. They showed that, if the number of repetitions is a multiple of the number of agents, there always exists a sequence of allocations that is proportional and Pareto-optimal. On the other hand, irrespective of the number of repetitions, an envy-free and Pareto-optimal sequence of allocations may not exist. They also showed that if the number of repetitions can be chosen freely, then envy-free and Pareto-optimal allocations are achievable for any number of agents.

\section{Utilitarian and restricted utilitarian welfare}

\noindent Suppose we have a $(k,F,W)$-scenario and a task is to allocate $m_j$ identical items of type $g_j$ (which form a set say $G_j$) to the agents for each $j\in [k]$ and optimize a welfare function. In general, even in case $k=1$, it is not known that we can allocate the goods item to item and achieve optimal allocation. If this happens then it is a case in which the sequential allocation is optimal. We consider general but computable utility functions. Then for each $j\in [k]$ and $i\in [n]$ there exists a computable and strictly increasing function $f_{ij}:\mathbb{R}^{\geq 0} \rightarrow \mathbb{R}^{\geq 0}$ such that utility of $y$ items of type $g_j$ for $A_i$ is $f_{ij}(y)$. Call a function $f:\mathbb{R}^{\geq 0} \rightarrow \mathbb{R}^{\geq 0}$ concave if $f(x+1)-f(x)$ is a non-increasing function in $x$. Note that if a utility function is concave then it is diminishing marginal utility. The weighted utilitarian welfare of an allocation $X=(x_{ij})$ for a $(k,F,W)$-scenario $S$ is defined as $W_{\mathfrak{m}}U(S,X)={\sum}_{i=1}^n \big[{\sum}_{j=1}^k w_{ij}f_{ij}(x_{ij})\big]$ and the maximum utilitarian welfare of $S$ is defined as $M_{\mathfrak{m}}WU(S)={\max} W_{\mathfrak{m}}U(S,X)$, where $\mathfrak{m}=(m_1, \ldots, m_k)$ and the maximum is over all allocations $X$ which allocates $m_j$ copies of $g_j$ to the agents.
 $$M_{\mathfrak{m}}WU(S)=\max_{X=(x_{ij}):\forall j\in [k],\sum_{i=1}^n x_{ij} = m_j} W_{\mathfrak{m}}U(S,X).$$

\noindent We sometimes denote $M_{\mathfrak{m}}WU(S)$ by $M_mWU(S)$ when we insist on the total number of items i.e. $m$ instead of its summands $m_1, \ldots, m_k$. In the following we present a sequential algorithm which offers each next item to the agent which maximizes $w_{ij}(f_{ij}(x_{ij}+1)-f_{ij}(x_{ij}))$ over all pairs $(i,j)\in [n]\times [k]$, where $x_{ij}$ is the utility of $A_i$ at the present step. Note that if in a $(1;F,W)$-scenario, one of $f_i$'s say $f_k$ is convex (i.e. $f_k(x+1)-f_k(x)$ is non-decreasing) and at some step an item $g_j$ has been offered to $A_k$ then all next items should be allocated to $A_k$. It follows that the non-trivial case of computing $M_mWU$ is when each $f_i$ is concave. This assertion holds for $(k,F,W)$-scenarios too. Recall that for an allocation $X=[x_{ij}]$, utility of each agent $A_i$ equals to $u_i(X)={\sum}_j f_{ij}(x_{ij})$.

\begin{prop}
In a $(k;F,W)$-scenario $S$ with $n$ agents, let $W=[w_{ij}]$ and $F=[f_{ij}]$ be the matrices of weights and utility functions, where each $f_{ij}$ is concave. Let $\mathfrak{m}=(m_1, \ldots, m_k)$ and the task is to allocate $m_j$ identical goods of type $g_j$, $j\in [k]$. Let $m={\sum}_j m_j$. Let $X^{m-1}=[x^{m-1}_{ij}]$ be an allocation of $m-1$ goods such that $M_{m-1}WU={\sum}_i u_i(X^{m-1})$. Let also $\Delta_{ij}^{m-1}= w_{ij}\big(f_{ij}(x^{m-1}_{ij}+1)-f_{ij}(x^{m-1}_{ij})\big)$. Then $$M_mWU=M_{m-1}WU+ \max_{(i,j):\sum_{r=1}^n x^{m-1}_{rj}<m_j} \Delta_{ij}^{m-1}.$$\label{recursive-util}
\end{prop}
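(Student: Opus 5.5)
The plan is to prove the identity as two inequalities. I read $M_{m-1}WU$ as the maximum of $\sum_i\sum_j w_{ij}f_{ij}(x_{ij})$ over allocations of $m-1$ goods with column sums $\sum_r x_{rj}\le m_j$; the restriction $\sum_r x^{m-1}_{rj}<m_j$ in the statement only makes sense under this reading. Likewise $M_mWU$ is this maximum with total $m$, which forces every column to be full.

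\emph{Lower bound.} Let $(i^*,j^*)$ attain the maximum of $\Delta^{m-1}_{ij}$ over admissible pairs. Add one copy of $g_{j^*}$ to agent $A_{i^*}$ in $X^{m-1}$. Admissibility gives $\sum_r x^{m-1}_{rj^*}<m_{j^*}$, so the result is a feasible allocation of $m$ goods. Only the $(i^*,j^*)$ term of the welfare changes, so its value is $M_{m-1}WU+\Delta^{m-1}_{i^*j^*}$. Hence $M_mWU\ge M_{m-1}WU+\max\Delta^{m-1}_{ij}$.

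\emph{Upper bound.} This is the main step, and I would handle it by a one-step exchange against an optimal allocation $X^m=[x^m_{ij}]$ for $m$ goods.

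First choose the column. Since $\sum_{i,j}x^m_{ij}=m>m-1=\sum_{i,j}x^{m-1}_{ij}$, some column $j$ satisfies $\sum_r x^m_{rj}>\sum_r x^{m-1}_{rj}$. Because $\sum_r x^m_{rj}\le m_j$, this gives $\sum_r x^{m-1}_{rj}<m_j$, so $(i,j)$ is admissible in the maximum for every $i$.

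Next choose the row. Within column $j$, some $i$ has $x^m_{ij}>x^{m-1}_{ij}$, so $x^m_{ij}-1\ge x^{m-1}_{ij}$.

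Now remove one copy of $g_j$ from $A_i$ in $X^m$. This yields a feasible allocation $Y$ of $m-1$ goods, so
$$M_{m-1}WU\ge WU(Y)=M_mWU-w_{ij}\big(f_{ij}(x^m_{ij})-f_{ij}(x^m_{ij}-1)\big).$$
By concavity of $f_{ij}$, its marginal increment is non-increasing. Since $x^m_{ij}-1\ge x^{m-1}_{ij}$,
$$w_{ij}\big(f_{ij}(x^m_{ij})-f_{ij}(x^m_{ij}-1)\big)\le \Delta^{m-1}_{ij}.$$
Combining the two displays gives $M_mWU\le M_{m-1}WU+\Delta^{m-1}_{ij}\le M_{m-1}WU+\max\Delta^{m-1}_{ij}$.

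The delicate point, which I expect to be the only real obstacle, is choosing the index pair in the right order. Picking the column first, by comparing column totals, guarantees admissibility. Picking the row second guarantees $x^m_{ij}-1\ge x^{m-1}_{ij}$, which is what lets concavity transfer the marginal gain from $X^m$ to $X^{m-1}$. Nonnegativity of the weights $w_{ij}$ is used implicitly in that comparison.
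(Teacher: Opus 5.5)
Your proof is correct and is essentially the paper's argument. The lower bound adds the maximizing item to $X^{m-1}$. The upper bound removes, from an optimal $m$-good allocation, one copy of $g_s$ from an agent $A_r$ with $y_{rs}\ge x^{m-1}_{rs}+1$ in a column that is non-full under $X^{m-1}$, and then applies concavity.

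There are only two cosmetic differences. You find the column by comparing column totals rather than by its non-fullness under $X^{m-1}$, which is equivalent here because every column is full at $m$ goods. You also state explicitly the reading of $M_{m-1}WU$ with column sums $\le m_j$, which the paper uses tacitly.
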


\noindent \begin{proof}
Write for simplicity ${\max}_{(i,j):\sum_{r=1}^n x^{m-1}_{ij}<m_j} \Delta_{ij}^{m-1}=\widetilde{\max}_{ij} \Delta_{ij}^{m-1}$. We first prove that $M_mWU\leq M_{m-1}WU+ \widetilde{\max}_{ij} \Delta_{ij}^{m-1}$. For each $j\in [k]$, denote the $m_j$ goods identical to $g_j$ as $g_j^1, \ldots, g_j^{m_j}$, where $\sum m_j =m$. Set $G_j=\{g_j^1, \ldots, g_j^{m_j}\}$. Let
$$Y:G_1 \cup \cdots \cup G_k \rightarrow \{A_1, \ldots, A_n\}$$
\noindent be an allocation of $m_j$ copies of type $g_j$, $j\in [k]$ and overall $m$ goods to the agents $A_i$, $i\in [n]$ such that $Y$ offers $y_{ij}$ goods of type $g_j$ to $A_i$, $i\in [n]$ and ${\sum}_{ij} w_{ij}f_{ij}(y_{ij})=M_mWU$.
Consider an allocation of $m-1$ goods in which $A_i$ receives $x_{ij}^{m-1}$ goods of type $g_j$ and $M_{m-1}WU={\sum}_{ij} w_{ij}f_{ij}(x^{m-1}_{ij})$. Assume that $p\in [n]$ and $q\in [k]$ are such that
$$w_{pq}\big(f_{pq}(x_{pq}^{m-1}+1)-f_{pq}(x_{pq}^{m-1})\big)=\max_{ij} \Delta_{ij}^{m-1}= \max_{ij} w_{ij}\big(f_{ij}(x_{ij}^{m-1}+1)-f_{ij}(x_{ij}^{m-1})\big).$$
\noindent Since $X^{m-1}$ allocates less than $m$ goods then for some $s\in [k]$, $X^{m-1}$ allocates less than $m_s$ goods from $G_j$. It follows that ${\sum}_i x^{m-1}_{is} < m_s$. We claim that there exists $r\in [n]$ such that $x_{rs}^{m-1}\leq y_{rs}-1$. Otherwise, if $y_{is}\leq x_{is}^{m-1}$ for each $i\in [n]$ then $m_s={\sum}_{i} y_{is} \leq {\sum}_{i} x_{is}^{m-1}<m_s$, a contradiction. Now, since $x_{rs}^{m-1}\leq y_{rs}-1$ and $f_{rs}$ is concave then we have the following
$$w_{rs}(f_{rs}(y_{rs})-f_{rs}(y_{rs}-1))\leq w_{rs}(f_{rs}(x_{rs}^{m-1}+1)-f_{rs}(x_{rs}^{m-1}))$$ $$\leq w_{pq}\big(f_{pq}(x_{pq}^{m-1}+1)-f_{pq}(x_{pq}^{m-1})\big)= \widetilde{\max}_{ij} \Delta_{ij}^{m-1}.~~~~~~~~~\clubsuit$$
\noindent Note that in the last inequality, since ${\sum}_i x^{m-1}_{is} < m_s$ then by the choice of $(p,q)$, $f_{rs}(x_{rs}^{m-1}+1)-f_{rs}(x_{rs}^{m-1})\leq f_{pq}(x_{pq}^{m-1}+1)-f_{pq}(x_{pq}^{m-1})$.

\noindent Since $x_{rs}^{m-1}\leq y_{rs}-1$ then $y_{rs}\not=0$. Hence the bundle of $A_r$ has an item of type $g_s$ under the allocation $Y$. Let $g^x_s$ be a good of type $g_s$ in the bundle of $A_r$ under the allocation $Y$.
Let $Y_0$ be an allocation obtained by restriction of $Y$ on $(G_1\cup \cdots \cup G_k) \setminus \{g^x_s\}$. Note that $Y$ offers $g^x_s$ to $A_r$. Then for each $(i,j)\not= (r,s)$, $Y^{-1}(A_i)\cap G_j=Y_0^{-1}(A_i)\cap G_j$ and $Y^{-1}(A_r)\cap G_s=(Y_0^{-1}(A_r)\cap G_s)\cup \{g^x_s\}$. It follows that $y_i=|B^{-1}(A_i)|=|B_0^{-1}(A_i)|$ for $i\not=j$ and $y_j=|B^{-1}(A_j)|=|B_0^{-1}(A_j)|+1$. We obtain the following expansion.
$$M_mWU=\sum_{ij} w_{ij}f_{ij}(y_{ij})=\sum_{ij} w_{ij}f_{ij}(|Y^{-1}(A_i)\cap G_j|)$$
$$=\sum_{(i,j)\not=(r,s)} w_{ij}f_{ij}(|Y_0^{-1}(A_i)\cap G_j|) + w_{rs}f_{rs}(|Y_0^{-1}(A_r)\cap G_s|+1)$$
$$={\sum}_{ij} w_{ij} f_{ij}(|Y_0^{-1}(A_i)\cap G_j|)+ w_{rs}\big[f_{rs}(|Y_0^{-1}(A_r)\cap G_s|+1)-f_{rs}(|Y_0^{-1}(A_r)\cap G_s|)\big].$$
\noindent The first summation is the welfare of an allocation with $m-1$ total goods and hence is at most $M_{m-1}WU$. For the second term note that $|Y_0^{-1}(A_r)\cap G_s|=|Y^{-1}(A_r)\cap G_s|-1$. It follows that
$$\big[f_{rs}(|Y_0^{-1}(A_r)\cap G_s|+1)-f_{rs}(|Y_0^{-1}(A_r)\cap G_s|)\big]=f_{rs}(y_{rs})-f_{rs}(y_{rs}-1).$$
\noindent We obtain the following in which we use $\clubsuit$. Proof of the desired inequality is implied.
$$M_mWU \leq M_{m-1}WU +w_{rs}\big(f_{rs}(y_{rs})-f_{rs}(y_{rs}-1)\big)\leq M_{m-1}WU + \widetilde{\max}_{ij} \Delta_{ij}^{m-1}.$$
\noindent The converse inequality $M_mWU\geq M_{m-1}WU+ \widetilde{\max}_{ij} \Delta_{ij}^{m-1}$ is proved simply. Recall the allocation $X^{m-1}=[x_{ij}^{m-1}]$ such that $M_{m-1}WU={\sum}_{ij} w_{ij}f_{ij}(x^{m-1}_{ij})$. And also
$p\in [n]$ and $q\in [k]$ such that
$$w_{pq}\big(f_{pq}(x_{pq}^{m-1}+1)-f_{pq}(x_{pq}^{m-1})\big)=\widetilde{\max}_{ij} \Delta_{ij}^{m-1}.$$
\noindent Now, to allocate $m$ goods we first allocate $m-1$ goods in such a way that $A_i$ receives $x_{ij}^{m-1}$ goods of type $g_j$, for each $i\in [n]$ and $j\in [k]$. And then offer a last item of type $g_q$ to $A_p$. The resulting allocation has utilitarian welfare $M_{m-1}WU+ \widetilde{\max}_{ij} \Delta_{ij}^{m-1}$. Therefore $M_mWU\geq M_{m-1}WU+ \widetilde{\max}_{ij} \Delta_{ij}^{m-1}$, as desired.
\end{proof}

\noindent Proposition \ref{recursive-util} suggests the following greedy algorithm with $m+1$ steps for determining $M_mWU(S)$. At the first step set $x_{ij}^0=0$, for each $i,j$. Obviously $M_0U=0$. Let $(r,s)$ be such that $f_{rs}(x_{rs}^0+1)-f_{rs}(x_{rs}^0))$ has maximum value. Define $x_{rs}^1=x_{rs}^0+1$ and $x_{ij}^1=x_{ij}^0$, for all $(i,j)\not=(r,s)$. It is clear that $M_1U=w_{rs}f_{rs}(x_{rs}^0+1)-f_{rs}(x_{rs}^0))$. In general, let $t$ be an integer with $2\leq t \leq m-1$ and let for each $i$ and $j$, $x_{ij}^t$ be the number of items in $G_j$ offered to $A_i$ by the algorithm at step $t$. Let $(p,q)$ be a pair subject to ${\sum}_i x_{iq}^t< m_q$ which maximizes  $w_{ij}(f_{ij}(x_{ij}^t+1)-f_{ij}(x_{ij}^t))$. Define $x_{pq}^{t+1}=x_{pq}^t+1$ (i.e. an item from $G_q$ is offered to $A_p$) and $x_{ij}^{t+1}=x_{ij}^t$, for $(i,j)\not=(p,q)$. By Proposition \ref{recursive-util} $M_{t+1}U=M_tU + w_{pq}(f_{pq}(x_{pq}^p+1)-f(x_{pq}^p))$. At step $t=m$ the algorithm obtains $M_mWU(S)$. At each step there are $\mathcal{O}(nk)$ arithmetic and comparison operations. Then the total complexity is $\mathcal{O}(mnk)$.

\noindent {\bf Name:} GUA (Greedy Utilitarian Allocation)
\newline {\bf Input:} $(k;F,W)$-scenario $S$, $F=[f_{ij}]$, $W=[w_{ij}]$, $\mathfrak{m}=(m_1, \ldots, m_k)$, $m={\sum}_j m_j$
\newline {\bf Output:} An allocation $X=[x_{ij}]$ maximizing ${\sum}_{ij} w_{ij}f_{ij}(x_{ij})$ such that ${\sum}_i x_{ij}=m_j$ (i.e. $M_mWU(S))$

\noindent 1. $\forall (i,j)\in [n]\times [k], x_{ij}^0=0$, $t=0$\\
\noindent 2. {\bf while} $t<m$ {\bf do}\\
\noindent 3. ~~~~~ $(r,s)= \arg {\max}_{(i,j):~{\sum}_i x_{ij}^t<m_j} [w_{ij}(f_{ij}(x_{ij}^t+1)-f_{ij}(x_{ij}^t))]$\\
\noindent 4. ~~~~~ $x_{rs}^{t+1}=x_{rs}^t+1$ $\&$ $\forall (i,j)\not= (r,s)$, $x_{ij}^{t+1}=x_{ij}^t$\\
\noindent 5. ~~~~~ $t \leftarrow t+1$\\
\noindent 6. {\bf end while}\\
\noindent 7. {\bf return} $x_{ij}^m$, $(i,j)\in [n] \times [k]$\\

\begin{cor}
Let $S$ be a $(k;F,W)$-scenario with $n$ agents and $m={\sum}_j m_j$ items, where each utility function in $F$ is concave. The task is to allocate $m_j$ identical goods of type $g_j$, for each $j\in [k]$. Then GUA of time complexity $\mathcal{O}(mnk)$ outputs an allocation $x_{ij}^m$ such that ${\sum}_{ij} w_{ij}f_{ij}(x^m_{ij})=M_mWU(S)$.
\end{cor}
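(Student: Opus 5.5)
The plan is to prove the corollary by induction on the step counter $t$ of GUA. The invariant is that after step $t$ the matrix $X^t=[x^t_{ij}]$ is a feasible partial allocation, in the sense that $\sum_i x^t_{ij}\le m_j$ for every $j$, and that $\sum_{ij} w_{ij}f_{ij}(x^t_{ij})=M_tWU(S)$. Here $M_tWU$ is the maximum weighted utilitarian welfare over all ways of distributing $t$ goods with at most $m_j$ copies of type $g_j$ used. This is exactly the quantity $M_{m-1}WU$ that Proposition \ref{recursive-util} works with. Note that its proof only uses an optimal allocation of $m-1$ items that respects the per-type caps, together with an optimal $m$-item allocation $Y$.

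The base case is $t=0$: $X^0$ is the zero matrix and $M_0WU=0$. For the inductive step, assume $X^t$ attains $M_tWU$ with $t<m$.

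\begin{itemize}
\item Since $t<\sum_j m_j$, some type $s$ satisfies $\sum_i x^t_{is}<m_s$. Hence the arg max in line 3 of GUA ranges over a nonempty set and the algorithm is well defined.
\item Apply Proposition \ref{recursive-util} with $X^t$ in the role of $X^{m-1}$ and $t+1$ in the role of $m$. The proof of the proposition goes through verbatim for any intermediate total $t+1\le m$: it compares against an optimal $(t+1)$-item allocation $Y$, finds a pair $(r,s)$ with $x^t_{rs}\le y_{rs}-1$, and uses concavity of $f_{rs}$. This yields
$$M_{t+1}WU=M_tWU+\max_{(i,j):\sum_r x^t_{rj}<m_j} w_{ij}\big(f_{ij}(x^t_{ij}+1)-f_{ij}(x^t_{ij})\big).$$
\item GUA increments exactly a maximizing pair $(r,s)$. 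The welfare of $X^{t+1}$ is therefore $M_tWU$ plus that maximum, which equals $M_{t+1}WU$.
\item Feasibility is preserved, because the chosen type $s$ had $\sum_i x^t_{is}<m_s$.
\end{itemize}

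At $t=m$ every cap is met with equality, since $\sum_j\sum_i x^m_{ij}=m=\sum_j m_j$ and each column sum is at most $m_j$. So $X^m$ is a genuine allocation, and its welfare equals $M_mWU(S)$.

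For the running time, each of the $m$ iterations evaluates at most $nk$ marginal gains $w_{ij}(f_{ij}(x_{ij}+1)-f_{ij}(x_{ij}))$ and takes their maximum. To check the feasibility condition in $O(1)$, I maintain the column sums $\sum_i x_{ij}$ incrementally. Treating each evaluation of $f_{ij}$ as one operation, as the paper does for computable utility functions, the total cost is $\mathcal{O}(mnk)$.

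The main obstacle is the legitimacy of invoking Proposition \ref{recursive-util} at intermediate totals $t<m$, where the per-type caps $m_j$ are not all simultaneously reachable. I would handle this by stating explicitly that the proposition's argument only needs an optimal capped allocation of $t+1$ items $Y$ and an optimal capped allocation of $t$ items. The counting step then becomes: some type $s$ has $\sum_i x^t_{is}<\sum_i y_{is}$, since the totals are $t<t+1$. This replaces the original step $\sum_i x^{m-1}_{is}<m_s=\sum_i y_{is}$, and the rest of the proof is unchanged.
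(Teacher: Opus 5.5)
Your proof is correct and follows the paper's route. You induct on the step counter of GUA, apply Proposition \ref{recursive-util} at each step (with $M_tWU$ read as the optimum over allocations of $t$ items using at most $m_j$ copies of each type), and count $\mathcal{O}(nk)$ operations per iteration. Your replacement of the counting step, choosing $s$ with $\sum_i x^t_{is}<\sum_i y_{is}\le m_s$, is the refinement needed to make the intermediate applications rigorous, which the paper passes over; you should therefore drop the word ``verbatim'' in your second bullet.
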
\label{greedy-util}

\noindent If we apply Proposition \ref{recursive-util} for utility function $f_i(x)=\ln (x)$ then $\max {\sum}_{i} w_{i}f_{i}(x_i)=\ln \max {\prod}_i x_i^{w_i}$. The latter value is known as weighted maximum Nash welfare of $S(1;F,W)$. We obtain

\begin{cor}
The weighted maximum Nash welfare for a $(1;F,W)$-scenario $S$ is obtained by a polynomial time algorithm.
\end{cor}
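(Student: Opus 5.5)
Specialize GUA (Corollary \ref{greedy-util}) to $k=1$ with $f_i(x)=\ln x$. Nash welfare is $\prod_i x_i^{w_i}$, and since $\ln$ is strictly increasing on $(0,\infty)$, maximizing it is the same as maximizing $\sum_i w_i\ln x_i$. The function $\ln$ is concave in the paper's sense: $\ln(x+1)-\ln x=\ln(1+1/x)$ is non-increasing. So Proposition \ref{recursive-util} and its greedy corollary apply, once we deal with $\ln 0=-\infty$, which lies outside the codomain $\mathbb{R}^{\geq 0}$ and is not a computable finite value.

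I would handle the boundary in two cases.

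\emph{Case $m<n$.} Every allocation leaves some agent with $0$ items. The maximum Nash product is then $0$, and any allocation attains it, which is trivial. (In the literature one sometimes maximizes over the largest set of agents that can receive positive utility; if that convention is intended, it is obtained by handling those agents as in the next case.)

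\emph{Case $m\ge n$.} A maximizer must give each agent at least one item, since otherwise the product is $0$ while the allocation giving one item to everyone has positive product. So I would first give one item to each agent and then run GUA on the remaining $m-n$ items, with the shifted utilities $g_i(y)=\ln(1+y)-\ln 1=\ln(1+y)$. These are strictly increasing, nonnegative, satisfy $g_i(0)=0$, and are concave. Hence they fit the hypotheses of Proposition \ref{recursive-util} exactly, and Corollary \ref{greedy-util} yields, in $\mathcal{O}((m-n)n)$ steps, an allocation $y$ maximizing $\sum_i w_i\ln(1+y_i)$. Setting $x_i=1+y_i$ gives a bijection between allocations of $m-n$ items and allocations of $m$ items in which every agent gets at least one. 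Under this bijection $\sum_i w_i\ln x_i=\sum_i w_i g_i(y_i)$, so $x$ maximizes weighted Nash welfare. Exponentiating recovers $\max\prod_i x_i^{w_i}$.

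The main obstacle is only this boundary issue at $0$. Concavity is immediate, and the rest follows from the earlier corollary. A secondary point is computability: the greedy step compares the quantities $w_i\ln(1+1/(x_i))$. To keep the algorithm exact, each comparison can be made by comparing $(1+1/x_i)^{w_i}$ against $(1+1/x_j)^{w_j}$ for rational weights. Alternatively, one can take the paper's standing assumption that utility functions are computable and treat each comparison as a unit operation. Either way the running time is polynomial, namely $\mathcal{O}(mn)$ comparisons.
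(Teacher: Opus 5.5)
Your proposal is correct and follows the paper's route. The paper simply applies Proposition~\ref{recursive-util} (i.e.\ GUA) with $f_i(x)=\ln x$ and uses $\max\sum_i w_i\ln x_i=\ln\max\prod_i x_i^{w_i}$; your extra steps (separating the trivial case $m<n$, giving each agent one item first, and running GUA with $\ln(1+y)$) properly repair the boundary problems that the paper's one-line argument glosses over, namely $\ln 0=-\infty$ and the negativity of $\ln$ on $(0,1)$.
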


\noindent In a restricted version of maximum utilitarian welfare problem, for each $i\in [n]$ an upper bound $\ell_i$ is imposed for total utility of agent $A_i$ and the task is to allocate $m$ identical items in a given scenario $S(1;F,W)$. Let also $\ell=(\ell_1, \ldots, \ell_n)$ be a vector with non-negative entries. First define the class of allocations $X$ consistent with $\ell$ as $\mathcal{C}(S,\ell,m)=\big\{X=(x_i):\forall r \in [n], {\sum}_{i=1}^n x_{i}=m~\&~f_r(x_r)\leq \ell_r\big\}$.
\noindent The maximum restricted utilitarian welfare $M_mU^{\leq \ell}(S)$ is defined as
\begin{equation}
M_mU^{\leq \ell}(S)=\begin{cases}
\max_{X=(x_i): X\in \mathcal{C}} {\sum}_{i=1}^n w_if_i(x_i) \quad\hspace{0cm} if~\mathcal{C}(S,\ell,m)\not= \varnothing\\
			
-\infty \quad\hspace{4.3cm} otherwise.
\end{cases}
\end{equation}

\noindent Note that it is possible that for some $m$, $\mathcal{C}(S,\ell,m)=\varnothing$. In this case $M_mU^{\leq \ell}(S)=-\infty$. Let $X^{m-1}$ be an allocation of $m-1$ identical goods. Recall that $\Delta_i^{m-1}= w_i\big(f_i(x^{m-1}_i+1)-f_i(x^{m-1}_i)\big)$. Define the set of indexes $r$ which are extendible from $X^{m-1}$ and keeping the extended allocation $\ell$-consistent as $I(X^{m-1},\ell)=\big\{r\in [n]: f_{r}(x^{m-1}_{r}+1)\leq \ell_r\}$. Also define

\begin{equation}
\tau(X^{m-1},\ell)=\begin{cases}
\max_{i\in I} \Delta_i^{m-1} \quad\hspace{0cm} if~I(X^{m-1},\ell)\not= \varnothing\\
			
-\infty \quad\hspace{1.8cm} otherwise.
\end{cases}
\end{equation}

\noindent The following proposition is counterpart of Proposition \ref{recursive-util}.

\begin{prop}
Let $S$ be $(1;F,W)$-scenario with $n$ agents, where $W=(w_i)$ is a weight vector and $F=(f_i)$ such that each $f_i$ is concave function. Let also $\ell=(\ell_1, \ldots, \ell_n)$ be a vector of bounds. Suppose a task is to allocate $m$ identical goods such that the total utility received by $A_i$ is at most $\ell_i$, for each $i$. Let $X^{m-1}=(x^{m-1}_i)_i$ be an allocation of $m-1$ goods such that $M_{m-1}U^{\leq \ell}(S)={\sum}_i u_i(X^{m-1})$. Then $$M_mU^{\leq \ell}(S)=M_{m-1}U^{\leq \ell}(S)+ \tau(X^{m-1},\ell).$$
\end{prop}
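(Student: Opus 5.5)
The plan is to follow the exchange argument of Proposition \ref{recursive-util}. The new ingredient is to check at each step that the allocations involved stay in $\mathcal{C}(S,\ell,\cdot)$, and to handle the degenerate value $-\infty$ separately. Two elementary facts will be used throughout.

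\emph{Fact (a): removing an item preserves consistency.} Since every $f_i$ is strictly increasing, removing one item from an $\ell$-consistent allocation of $m$ goods gives an $\ell$-consistent allocation of $m-1$ goods. Hence $\mathcal{C}(S,\ell,m)\neq\varnothing$ implies $\mathcal{C}(S,\ell,m-1)\neq\varnothing$.

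\emph{Fact (b): adding an item to an agent in $I$ preserves consistency.} If $r\in I(X^{m-1},\ell)$, then adding one item to the bundle of $A_r$ in $X^{m-1}$ gives an $\ell$-consistent allocation of $m$ goods.

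\textbf{Degenerate case.} Suppose $\mathcal{C}(S,\ell,m)=\varnothing$, so $M_mU^{\leq \ell}(S)=-\infty$. If $M_{m-1}U^{\leq \ell}(S)=-\infty$ there is nothing to prove. Otherwise $X^{m-1}$ exists, and by Fact (b) the set $I(X^{m-1},\ell)$ must be empty, so $\tau=-\infty$. Either way the right-hand side is $-\infty$.

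\textbf{Main case: upper bound.} Assume $\mathcal{C}(S,\ell,m)\neq\varnothing$. By Fact (a), $M_{m-1}U^{\leq \ell}(S)$ is finite and attained by $X^{m-1}$. Let $Y=(y_i)$ be an $\ell$-consistent allocation of $m$ goods attaining $M_mU^{\leq \ell}(S)$. Since $\sum_i y_i=m>m-1=\sum_i x^{m-1}_i$, there is an index $r$ with $x^{m-1}_r\leq y_r-1$. This is the key point. By monotonicity,
$$f_r(x^{m-1}_r+1)\leq f_r(y_r)\leq \ell_r,$$
so $r\in I(X^{m-1},\ell)$. Hence $I\neq\varnothing$ and $\Delta_r^{m-1}\leq\tau(X^{m-1},\ell)$. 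Concavity of $f_r$ together with $x^{m-1}_r\leq y_r-1$ gives
$$w_r\big(f_r(y_r)-f_r(y_r-1)\big)\leq \Delta_r^{m-1}\leq \tau(X^{m-1},\ell).$$
Now remove one item from $A_r$'s bundle in $Y$. By Fact (a) the result $Y_0$ is $\ell$-consistent, so its welfare is at most $M_{m-1}U^{\leq \ell}(S)$. Expanding exactly as in the proof of Proposition \ref{recursive-util},
$$M_mU^{\leq \ell}(S)=WU(Y_0)+w_r\big(f_r(y_r)-f_r(y_r-1)\big)\leq M_{m-1}U^{\leq \ell}(S)+\tau(X^{m-1},\ell).$$

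\textbf{Main case: lower bound.} Take $p\in I(X^{m-1},\ell)$ attaining $\tau$; this index exists because $I\neq\varnothing$ was shown above. Add one item to $A_p$ in $X^{m-1}$. By Fact (b) this is an $\ell$-consistent allocation of $m$ goods, and its welfare is $M_{m-1}U^{\leq \ell}(S)+\tau(X^{m-1},\ell)$. This gives $M_mU^{\leq \ell}(S)\geq M_{m-1}U^{\leq \ell}(S)+\tau(X^{m-1},\ell)$.

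\textbf{Main obstacle.} The step needing care is that the agent $r$ produced by the counting argument lies in $I(X^{m-1},\ell)$. This is where the upper bounds enter: an unconstrained maximizer of $\Delta_i^{m-1}$ might violate a cap. The inequality $x^{m-1}_r+1\leq y_r$, combined with $f_r(y_r)\leq\ell_r$ from the consistency of $Y$, settles it.
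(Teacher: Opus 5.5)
Your proof is correct and follows the paper's argument essentially step for step. It uses the same counting step to produce $r$ with $x^{m-1}_r\leq y_r-1$, the same observation that $\ell$-consistency of $Y$ forces $r\in I(X^{m-1},\ell)$ (the paper phrases this as a contradiction, you argue directly), concavity to bound $w_r\big(f_r(y_r)-f_r(y_r-1)\big)$ by $\tau(X^{m-1},\ell)$, and removal of one item from $A_r$'s bundle in $Y$. Your Facts (a) and (b) only make explicit the consistency checks the paper leaves implicit, namely in the degenerate case and in the converse inequality the paper calls clear; like the paper, you read $X^{m-1}$ as an $\ell$-consistent maximizer there.
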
\label{restrict-util}

\noindent \begin{proof}
\noindent First note that $M_mU^{\leq \ell}(S)=-\infty$ if and only if either no allocation of $m-1$ goods is $\ell$-consistent or no $\ell$-consistent allocation of $m-1$ goods is extendible to an $\ell$-consistent allocation of $m$ goods if and only if either $M_{m-1}U^{\leq \ell}(S)=-\infty$ or $\tau(Y^{m-1},\ell)=-\infty$ for every allocation $Y^{m-1}$ with $m-1$ goods for $S$. Then in this case the asserted equality holds. Assume hereafter that $M_mU^{\leq \ell}(S)\not=-\infty$. It follows that $M_{m-1}U^{\leq \ell}(S)\not=-\infty$. Take an allocation $X^{m-1}=(x^{m-1}_i)$ with $m-1$ items such that $M_{m-1}U^{\leq \ell}(S)={\sum}_i u_i(X^{m-1})$. We first prove that $I(X^{m-1},\ell)\not= \varnothing$ and hence $\tau(X^{m-1},\ell)\not= -\infty$. Next, we prove that $M_mU^{\leq \ell}(S) \leq M_{m-1}U^{\leq \ell}(S)+ \tau(X^{m-1},\ell)$.

\noindent Let $Y:\{g_1, \ldots, g_m\} \rightarrow \{A_1, \ldots, A_n\}$ be an $\ell$-consistent allocation of $m$ goods to the agents which offers $y_i$ goods to $A_i$, $i\in [n]$ and ${\sum}_i w_if_i(y_i)=M_mU^{\leq \ell}(S)$. Assume that $p\in [n]$ is such that
$$w_p\big(f_p(x_p^{m-1}+1)-f_p(x_p^{m-1})\big)=\tau(X^{m-1},\ell)= \max_{i\in I} w_i\big(f_i(x_p^{m-1}+1)-f_i(x_i^{m-1})\big).$$
\noindent We claim that there exists $r\in [n]$ such that $x_r^{m-1}\leq y_r-1$. Otherwise, if $y_i\leq x_i^{m-1}$ for each $i\in [n]$ then $m={\sum}_{i} y_i \leq {\sum}_{i} x_{i}^{m-1}=m-1$, a contradiction. On the other hand, $r\in I(X^{m-1},\ell)$ since otherwise $f_r(x_r^{m-1}+1)>\ell_r$. Now, $y_r \geq x_r^{m-1}+1$ implies that $f_r(y_r)\geq f_r(x_r^{m-1}+1) >\ell_r$. This contradicts the fact that $Y$ is $\ell$-consistent. It follows that $r\in I(X^{m-1},\ell)$ and $\tau(X^{m-1},\ell)\not= -\infty$. Now, since $x_{r}^{m-1}\leq y_{r}-1$, $r\in I$ and $f_{r}$ is concave we obtain the following inequalities. The first inequality is obtained by the concavity of $f_r$ and the second is obtained by $r\in I$ and maximal property of $p$.
$$w_r(f_r(y_r)-f_r(y_r-1))\leq w_r(f_r(x_r^{m-1}+1)-f_r(x_r^{m-1}))$$ $$\leq w_p\big(f_p(x_p^{m-1}+1)-f_p(x_p^{m-1})\big)=\tau(X^{m-1},\ell).~~~~~~~~~\clubsuit$$
\noindent The rest of proof is similar to the proof of Proposition \ref{recursive-util}. Let $Y_0$ be as in the proof.
$$M_mU^{\leq \ell}(S)={\sum}_i w_if_i(y_i)=\sum_i w_if_i(|Y^{-1}(A_i)|)$$
$$=\sum_{i\not=r} w_if_i(|Y_0^{-1}(A_i)|) + w_rf_r(|Y_0^{-1}(A_r)|+1)$$
$$={\sum}_i w_if_i(|Y_0^{-1}(A_i)|)+ w_r\big[f_r(|Y_0^{-1}(A_r)|+1)-f_r(|Y_0^{-1}(A_r)|)\big].$$
\noindent The first summation is the welfare of an $\ell$-consistent allocation with $m-1$ total goods and hence is at most $M_{m-1}U^{\leq \ell}$. For the second term note that $|Y_0^{-1}(A_r)|=|Y^{-1}(A_r)|-1$. It follows that
$$\big[f_r(|Y_0^{-1}(A_r)|+1)-f_r(|Y_0^{-1}(A_r)|)\big]=f_r(y_r)-f_r(y_r-1).$$
\noindent We obtain the following in which we use $\clubsuit$. Proof of the desired inequality is deduced.
$$M_mU^{\leq \ell}(S) \leq M_{m-1}U^{\leq \ell}(S) +w_r\big(f_r(y_r)-f_r(y_r-1)\big)\leq M_{m-1}U^{\leq \ell}(S) + \tau(X^{m-1},\ell).$$
\noindent The converse inequality is clear.
\end{proof}

\noindent The following algorithm obtained by Proposition \ref{restrict-util} is a modification of Greedy Utilitarian Allocation and determines $M_mU^{\leq \ell}(S)$.

\noindent {\bf Name:} GRUA (Greedy Restricted Utilitarian Allocation)
\newline {\bf Input:} $S(1;F,W)$, $F=(f_i)$, $W=(w_i
)$, integer $m$, upper bounds $(\ell_1, \ldots, \ell_n)$
\newline {\bf Output:} An allocation $X=(x_i)$ maximizing ${\sum}_i w_if_i(x_i)$ over all $(x_i)$ such that ${\sum}_i x_i=m$ and $\forall i, f_i(x_i)\leq \ell_i$

\noindent 1. $t=0$, $x_i^0=0$ for each $i\in [n]$\\
\noindent 2. {\bf while} $t<m$ {\bf do}\\
\noindent 3. ~~ {\bf if} $\exists i$ such that $f_i(x_i^t+1)\leq \ell_i$ {\bf then}\\
\noindent 4. ~~~ $p= \arg \max \{w_i(f_i(x_i^t+1)-f_i(x_i^t)): f_i(x_i^t+1)\leq \ell_i\}$\\
\noindent 5. ~~~~~~~ $x_p^{t+1}=x_p^t+1$ $\&$ $x_i^{t+1}=x_i^t, \forall i\not= p$\\
\noindent 6. ~~~~~~~~~ $t \leftarrow t+1$\\
\noindent 7. ~~~~~~~ {\bf else}\\
\noindent 8. ~~~~~~{\bf return} $M_mU^{\leq \ell}(S)=-\infty$\\
\noindent 9. {\bf end while}\\
\noindent 10. {\bf return} $x_i^m$, $i\in [n]$\\

\noindent Proof of the following result is similar to that of Proposition \ref{recursive-util}. We omit the details.

\begin{prop}
Given a $(1;F,W)$-scenario $S$ and a bound $\ell_i$ for each agent $A_i$, GRUA of time complexity $\mathcal{O}(nm)$ determines $M_mU^{\leq \ell}(S)$ and in case that $M_mU^{\leq \ell}(S)\not= -\infty$ it provides an allocation $(x_1, \ldots, x_n)$ satisfying $M_mU^{\leq \ell}(S)= {\sum}_i w_if_i(x_i)$.\label{r-utilitar}
\end{prop}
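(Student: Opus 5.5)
The argument is an induction on the number $t$ of goods allocated by GRUA, with Proposition \ref{restrict-util} supplying the inductive step. Write $X^t=(x^t_i)$ for the vector held by GRUA after $t$ iterations of the while loop. The invariant to prove is: if the loop reaches iteration $t$ without returning, then $X^t$ is $\ell$-consistent and satisfies
\[
{\sum}_i w_if_i(x^t_i)=M_tU^{\leq \ell}(S).
\]
(Here $\ell$-consistency means $\sum_i x_i^t=t$ and $f_i(x_i^t)\leq \ell_i$ for all $i$.) The base case $t=0$ is immediate. Since $f_i(0)\geq 0$, we need $\ell_i\geq f_i(0)$ for every $i$. Either this holds, or $\mathcal{C}(S,\ell,0)$ is empty. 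In the latter case $\mathcal{C}(S,\ell,t)=\varnothing$ for all $t$ by monotonicity of the $f_i$, so no $i$ passes the test in line 3 and GRUA correctly returns $-\infty$. I would treat this degenerate case separately at the start.

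For the inductive step, assume the invariant holds at step $t<m$. Then $X^t$ is exactly an optimal allocation of $t$ goods in the sense required by Proposition \ref{restrict-util}, applied with $t+1$ in place of $m$. The set tested in line 3 is precisely $I(X^t,\ell)$.

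If this set is nonempty, line 4 chooses $p$ attaining $\tau(X^t,\ell)$, and $X^{t+1}$ is $\ell$-consistent because $f_p(x^t_p+1)\leq \ell_p$. Its welfare is
\[
M_tU^{\leq \ell}(S)+\tau(X^t,\ell)=M_{t+1}U^{\leq\ell}(S)
\]
by the proposition, which preserves the invariant.

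If the set is empty, then $\tau(X^t,\ell)=-\infty$, and the proposition gives $M_{t+1}U^{\leq \ell}(S)=-\infty$. I then need to propagate infeasibility up to $m$. Any $\ell$-consistent allocation of $m$ goods can be truncated, by removing goods from agents, to an $\ell$-consistent allocation of $t+1$ goods, since the $f_i$ are increasing. Hence $\mathcal{C}(S,\ell,m)=\varnothing$, and returning $-\infty$ is correct. When the loop ends at $t=m$, the invariant gives the claimed allocation.

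I expect the main obstacle to be a subtle point in Proposition \ref{restrict-util}: its proof shows that $I(X^{m-1},\ell)\neq\varnothing$ whenever $M_m$ is finite, but the statement uses a \emph{particular} optimal $X^{m-1}$. I must make sure that the empty-$I$ branch really certifies infeasibility for GRUA's specific $X^t$. The argument above handles this by contraposition: if $M_{t+1}$ were finite, then $I(X^t,\ell)$ would be nonempty by that proof.

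For the complexity claim, there are $m$ iterations. Each iteration evaluates $O(n)$ values $f_i(x^t_i+1)$ and marginals, and takes a maximum. Counting each evaluation of a computable $f_i$ as unit cost, as the paper does for GUA, this gives $\mathcal{O}(nm)$.
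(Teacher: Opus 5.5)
Your argument is correct and follows the route the paper intends: the paper omits the proof, saying only that it is similar to Proposition \ref{recursive-util}, and you fill it in by inducting on the number of allocated goods with the restricted recursion $M_{t+1}U^{\leq \ell}(S)=M_tU^{\leq \ell}(S)+\tau(X^t,\ell)$ as the inductive step. That recursion needs the $f_i$ to be concave, which you should state explicitly since the proposition omits it, and your contrapositive remark that $I(X^t,\ell)\neq\varnothing$ for \emph{any} $X^t$ whenever $\mathcal{C}(S,\ell,t+1)\neq\varnothing$ correctly closes the infeasible branch.

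One slip concerns the degenerate base case $\ell_j<f_j(0)$. It does not follow that no agent passes the test in line 3: other agents may pass it, and GRUA may then output an allocation that is not $\ell$-consistent. So this case genuinely needs an explicit initial check, although it cannot arise under the paper's tacit convention $f_i(0)=0\leq \ell_i$.
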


\noindent We end this section by a result concerning Pareto optimality. Let $X=[x_{ij}]$ be an allocation for a $(k;F,W)$-scenario which allocates $m_j$ identical copies of type $g_j$ for each $j\in [k]$. Let $Y=[y_{ij}]$ be a Pareto improvement of $X$. Then there exists $(r,s)\in [n]\times [k]$ such that $f_{rs}(y_{rs})>f_{rs}(x_{rs})$ and hence $y_{rs}>x_{rs}$. We have $m_s={\sum}_i y_{is}> {\sum}_i x_{is}=m_s$. This contradiction proves the following.

\begin{prop}
Every allocation for a $(k;F,W)$-scenario is Pareto Optimal.\label{pareto}
\end{prop}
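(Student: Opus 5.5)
The proof will be by contradiction, using only that every $f_{ij}$ is strictly increasing and that each type is fully distributed. Fix an allocation $X=[x_{ij}]$ of the $(k;F,W)$-scenario with $\sum_i x_{ij}=m_j$ for every $j\in[k]$. Suppose some allocation $Y=[y_{ij}]$, with $\sum_i y_{ij}=m_j$ for every $j$, Pareto-improves $X$. Then $u_i(Y)\ge u_i(X)$ for all $i$, with strict inequality for at least one agent, where $u_i(X)=\sum_j f_{ij}(x_{ij})$.

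The first step is to find a single pair $(r,s)$ with $y_{rs}>x_{rs}$. Pick an agent $r$ with $u_r(Y)>u_r(X)$. Then $\sum_j f_{rj}(y_{rj})>\sum_j f_{rj}(x_{rj})$, so some type $s$ satisfies $f_{rs}(y_{rs})>f_{rs}(x_{rs})$. Since $f_{rs}$ is strictly increasing, this gives $y_{rs}>x_{rs}$.

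The second step is to compensate within type $s$. Because $\sum_i y_{is}=m_s=\sum_i x_{is}$, some other agent $r'\neq r$ must have $y_{r's}<x_{r's}$, so $f_{r's}(y_{r's})<f_{r's}(x_{r's})$.

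The third step, which I expect to be the main obstacle, is turning this single loss into a contradiction with $u_{r'}(Y)\ge u_{r'}(X)$. For $k=1$ it is immediate: $u_{r'}=f_{r'}$, so $u_{r'}(Y)<u_{r'}(X)$, contradicting the Pareto condition. For $k\ge 2$, agent $r'$ could in principle be compensated by gains in another type, and the aggregated utility $u_i=\sum_j f_{ij}$ does not obviously rule this out. I would first run the argument coordinatewise, treating the utility of $A_i$ from type $j$, namely $f_{ij}(x_{ij})$, as the component being compared. This matches the paper's remark that the types are independent, so the distribution of one type does not affect another. Under this reading, a Pareto improvement requires $f_{ij}(y_{ij})\ge f_{ij}(x_{ij})$ for all $(i,j)$, hence $y_{ij}\ge x_{ij}$ for all $(i,j)$, with $y_{rs}>x_{rs}$. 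Summing over $i$ then gives $m_s=\sum_i y_{is}>\sum_i x_{is}=m_s$, a contradiction.

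If instead the aggregated utilities $u_i$ are intended, I would check whether the statement still holds. Two agents with crossed preferences over two types already seem to give a Pareto improvement by swapping units, so I would restrict the statement to the per-type (equivalently $k=1$) setting and present the argument above.
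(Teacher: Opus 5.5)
Under the per-type (or $k=1$) reading, your argument is exactly the paper's: a strictly improving pair $(r,s)$ forces $y_{rs}>x_{rs}$, and summing over type $s$ then contradicts $\sum_i y_{is}=m_s$. Your caution for $k\ge 2$ is justified, because that summation tacitly needs $y_{is}\ge x_{is}$ for every $i$. The paper does not justify this step, and it fails for the aggregated utilities $u_i=\sum_j f_{ij}(x_{ij})$ that the paper uses to define Pareto optimality. For example, take $n=k=2$, $m_1=m_2=1$, $f_{11}(x)=f_{22}(x)=x$ and $f_{12}(x)=f_{21}(x)=2x$. Giving $g_1$ to $A_1$ and $g_2$ to $A_2$ yields utilities $(1,1)$, while swapping the two items yields $(2,2)$. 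So the restriction you propose is necessary, not a weakness of your proof.
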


\section{Rawlsian and Leximinal welfare and more}

\noindent Recall that the utility functions are computable and strictly increasing and then invertible. In this section we assume that $f^{-1}$ is computable for each utility function $f$. This means that for each $y$ in the range of $f$, the equation $f(x)=y$ can be solved for $x$ (or a suitable rational approximation of $x$) by a deterministic algorithm. Hence, we do not concern the complexity of computing $x$ and only consider the cost of algorithms in terms of scenario parameters. By a weighted maximin allocation in a $(1; F, W)$-scenario $S$ we mean an allocation $(x_1, \ldots, x_n)$ of $m$ items such that ${\min}_i f_i(x_i)/w_i$ is the maximin welfare of the scenario.

\begin{prop}
Let $S$ be a $(1; F, W)$-scenario with $m$ copies of a single $\underline{\it divisible}$ good, where each member of $F$ is continuous. Define $\lambda=\max t$ satisfying $t\leq f_i(x_i)/w_i$ for each $i\in [n]$, where the maximum is taken over all positive real values $x_1, \ldots, x_n$ with ${\sum}_i x_i =m$. Then $\lambda$ is achieved when for each $i\in [n]$, $\lambda = f_i(x_i)/w_i$, or equivalently $x_i=f_i^{-1}(\lambda w_i)$.\label{indiv}
\end{prop}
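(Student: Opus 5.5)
We argue by a continuity (intermediate value) argument for existence, followed by an exchange argument for optimality. To apply the intermediate value theorem cleanly, we assume each $f_i$ is defined on $[0,\infty)$ with $f_i(0)=0$ and is unbounded, or at least that its range contains the relevant values; this is the setting in which $f_i^{-1}(\lambda w_i)$ makes sense. Each $f_i$ is continuous and strictly increasing, so $f_i^{-1}$ is continuous and strictly increasing on the range of $f_i$. Define
$$\phi(t)=\sum_{i=1}^n f_i^{-1}(t w_i),$$
a continuous, strictly increasing function of $t\geq 0$, with $\phi(0)=\sum_i f_i^{-1}(0)$, which equals $0$ under the normalization $f_i(0)=0$. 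By the intermediate value theorem there is a unique $t^*$ with $\phi(t^*)=m$. Put $x_i^*=f_i^{-1}(t^* w_i)$. Then $\sum_i x_i^*=m$ and $f_i(x_i^*)/w_i=t^*$ for every $i$, so $(x_i^*)$ is a feasible division attaining the value $t^*$. Hence $\lambda\geq t^*$.

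For the reverse inequality, take any feasible $(x_1,\ldots,x_n)$ with $\sum_i x_i=m$ and any $t$ satisfying $t\leq f_i(x_i)/w_i$ for all $i$. Monotonicity of $f_i$ gives $x_i\geq f_i^{-1}(t w_i)$, where we use $f_i^{-1}$ extended monotonically if $tw_i$ lies above the range. Summing over $i$ gives $m\geq \phi(t)$, and since $\phi$ is strictly increasing, $t\leq t^*$. Therefore $\lambda=t^*$, the maximum is attained, and it is attained at $x_i=f_i^{-1}(\lambda w_i)$.

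We also show that every maximizer equalizes all ratios, which is the ``achieved when'' direction. If an optimal $(x_i)$ had some $j$ with $f_j(x_j)/w_j>\lambda$, then $x_j>f_j^{-1}(\lambda w_j)$. Since every $i$ satisfies $x_i\geq f_i^{-1}(\lambda w_i)$, summing gives $m>\phi(\lambda)=m$, a contradiction. Equivalently, one could shift a small amount $\varepsilon$ from $x_j$ to all the minimizing agents; continuity keeps $j$ above $\lambda$ while strictly raising the minimum, contradicting maximality.

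The main obstacle is the domain and range issue in the intermediate value step: we must ensure $\phi$ actually reaches $m$ and that $f_i^{-1}(tw_i)$ is defined. We handle this with the normalization $f_i(0)=0$ and by noting that $\phi(t)\to$ a value $\geq m$ as $t$ grows, since the $x_i$ can range over $[0,m]$. The positivity requirement $x_i>0$ in the statement is handled by the strict monotonicity of $f_i^{-1}$ on positive $t$.
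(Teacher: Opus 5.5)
Your proof is correct, given the normalization $f_i(0)=0$ that you impose, but it takes a different route from the paper. The paper gets a maximizer from compactness of $\{x\ge 0:\sum_i x_i=m\}$ and then uses a local exchange argument. If the ratios $f_i(x_i)/w_i$ are not all equal, it moves a small $\theta$ from an agent strictly above the minimum to the minimizing agents, one at a time; continuity keeps the donor above $\lambda$, so the minimum eventually rises, contradicting optimality.

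You argue globally through $\phi(t)=\sum_i f_i^{-1}(tw_i)$. The intermediate value theorem constructs the equalizing division outright. The single inequality $m=\sum_i x_i\ge\phi(t)$ then gives both the optimality of $t^*$ and the fact that every optimizer equalizes, so the optimizer is unique. What this buys you is a characterization of $\lambda$ as the root of $\phi(t)=m$, the continuous counterpart of the equation $\sum_i\lceil f_i^{-1}(\mu w_i)\rceil=m$ that drives Theorem~\ref{R1item}. The cost is bookkeeping with inverses and ranges, which the exchange argument never needs.

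Two small points. First, your normalization is genuinely needed, not cosmetic. For $f_1(x)=x+10$, $f_2(x)=x$, $w_1=w_2=1$, $m=1$, the optimum is $x=(0,1)$ with unequal ratios $10$ and $1$, and over strictly positive $x_i$ the supremum is not even attained. The paper's proof silently relies on the same assumption, since its donor $A_{k+1}$ must have $x_{k+1}>0$.

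Second, your range step needs neither unboundedness nor a monotone extension of $f_i^{-1}$. On $[0,\min_i f_i(m)/w_i]$ the function $\phi$ is defined and $\phi(0)=0<m$. At the right endpoint the minimizing index alone contributes $m$, so $t^*$ exists and is positive, which gives $x_i^*>0$. Also, in your upper-bound step, for $t\ge 0$ the value $tw_i\le f_i(x_i)$ already lies in the range of $f_i$.
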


\noindent \begin{proof}
Since $\{(x_1, \ldots, x_n):0\leq x_i~\&~{\sum}_i x_i =m\}$ is a compact subset of $\mathbb{R}^n$ then there exists an $n$-tuple $(x_1, \ldots, x_n)$ with ${\sum}_i x_i =m$ such that $\lambda={\min}_i f_i(x_i)/w_i$. If necessary with a suitable relabeling, assume that $\lambda = f_1(x_1)/w_1 \leq \cdots \leq f_n(x_n)/w_n$. If the assertion is not valid then there exists $k\geq 1$ such that $f_1(x_1)/w_1= \cdots = f_k(x_k)/w_k <f_{k+1}(x_{k+1})/w_{k+1}$. Now for a suitable positive number $\theta$ and for a new allocation $x_1+\theta, x_2, \ldots, x_k, x_{k+1}-\theta, x_{k+2}, \ldots, x_n$, we have $f_2(x_2)/w_2 = f_3(x_3)/w_3 = \cdots = f_k(x_k)/w_k \leq f_1(x_1+\theta)/w_1 \leq f_{k+1}(x_{k+1}-\theta)/w_{k+1} \leq \cdots$. By repeating the above technique we arrive at an allocation $z_1, \ldots, z_n$ for which ${\min}_i f_i(z_i)/w_i > {\min}_i f_i(x_i)/w_i=\lambda$. This contradicts the property of $\lambda$.
\end{proof}

\begin{thm}
Let $m$ be a positive integer and $S$ be a $(1;F,W)$-scenario, such that $F=(f_1, \ldots, f_n)$ and for each $i$, $f_i$ is surjective onto the interval $\{y: 0\leq y \leq f_i(m)\}$. Then there exists an algorithm of time complexity ${\mathcal{O}}(mn\log n)$ which provides a weighted maximin allocation for $S$ with exactly $t$ items for each $t\leq m$.\label{R1item}
\end{thm}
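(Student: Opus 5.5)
The plan is to reduce the integral problem to the divisible one of Proposition~\ref{indiv}, round down, and then repair the rounding greedily. Fix $t\leq m$. The surjectivity hypothesis makes $f_i^{-1}$ defined on all of $[0,f_i(m)]$, and each $f_i$ is strictly increasing, so the function $\Phi(\mu)=\sum_i f_i^{-1}(\mu w_i)$ is continuous and strictly increasing on the relevant range. Hence there is a unique $\lambda_t$ with $\Phi(\lambda_t)=t$. By Proposition~\ref{indiv}, $\lambda_t$ is the divisible maximin value and is attained at $x_i=f_i^{-1}(\lambda_t w_i)$. Since any integral allocation is also a divisible one, the integral optimum $\mu^*_t$ satisfies $\mu^*_t\leq\lambda_t$.

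Next I would set $y_i=\lfloor x_i\rfloor$. Then $\sum_i y_i\geq t-n+1$, so at most $n-1$ items remain unallocated. I would distribute these by the standard ``raise the minimum'' rule. Keep a min-heap keyed by $f_i(y_i)/w_i$ (ties broken by $f_i(y_i+1)/w_i$). Repeatedly give one item to an agent with the smallest current ratio and update the heap. This repair costs $\mathcal{O}(n\log n)$ for a fixed $t$. Repeating it for every $t\leq m$ gives the claimed $\mathcal{O}(mn\log n)$. As stated in this section, the cost of computing $\lambda_t$ and the inverses $f_i^{-1}$ is not counted.

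For correctness, define the requirement $r_i(\mu)=\lceil f_i^{-1}(\mu w_i)\rceil$. The integral value $\mu$ is achievable with $t$ items if and only if $\sum_i r_i(\mu)\leq t$, since surplus items can be given to anyone without lowering the minimum. I would prove, by induction over the repair steps, the invariant that the current partial allocation $y$ can be completed to an optimal one. The inductive step is the usual exchange argument. Suppose the current minimum ratio $v$, attained by agent $j$, is below $\mu^*_t$. Then every allocation of value $\mu^*_t$ gives $j$ at least $y_j+1$ items. Take an optimal $z\geq y$; if $z_j=y_j$, move one item to $j$ from an agent $i$ with $z_i>y_i$. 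Such an $i$ exists because $\sum_i y_i<t$. This yields an allocation $z'\geq y+e_j$ whose value is still at least $\mu^*_t$, because agent $i$ keeps at least $y_i$ items. If instead $v\geq\mu^*_t$, then any completion of $y$ is already optimal.

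The main obstacle is the base case: showing that the rounded vector $\lfloor x\rfloor$ is dominated by some optimal integral allocation. This is not automatic, since $\lfloor x_i\rfloor$ may exceed $r_i(\mu^*_t)$. I would first try the following argument. Let $z$ be optimal and suppose $z_i<\lfloor x_i\rfloor$ for some $i$. Because $\sum_i z_i=t=\sum_i x_i$, some $k$ has $z_k>x_k$, so $f_k(z_k-1)/w_k\geq f_k(\lfloor x_k\rfloor)/w_k$. Moving one item from $k$ to $i$ then keeps every ratio at least $\min\{\mu^*_t,\,f_k(z_k-1)/w_k\}$. This quantity should be shown to be at least $\mu^*_t$; the key comparison is $f_k(z_k-1)\geq f_k(\lceil x_k\rceil-1)$ combined with $\mu^*_t\leq\lambda_t$. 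Iterating the move produces an optimal $z\geq\lfloor x\rfloor$.

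If this comparison fails, the fallback is to run the greedy repair from $y=0$ instead. That version is correct by the same invariant, whose base case is trivial. The cost is that the running time becomes $\mathcal{O}(m\log n)$ per value of $t$, or a single incremental pass over $t=1,\dots,m$. That pass still fits within $\mathcal{O}(mn\log n)$.
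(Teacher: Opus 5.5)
Your primary route fails at the step you flagged: $\lfloor x\rfloor$ need not be dominated by any optimal integral allocation. The comparison you hoped for cannot work. The two facts $f_k(\lceil x_k\rceil-1)/w_k<\lambda_t$ and $\mu^*_t\le\lambda_t$ only bound both quantities above by $\lambda_t$, so they say nothing about $f_k(z_k-1)/w_k$ versus $\mu^*_t$.

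Here is a concrete counterexample. Take $n=3$, $w_1=w_2=w_3=1$ and $t=m=5$. Let $f_1=f_2$ be piecewise linear through $(0,0),(1,8),(2,12)$, and let $f_3$ be piecewise linear through $(0,0),(1,9),(2,10)$, both continued with slope $1$. These are concave and satisfy the surjectivity hypothesis.
\begin{itemize}
\item $\Phi(10)=1.5+1.5+2=5$, so $\lambda_5=10$ and $\lfloor x\rfloor=(1,1,2)$.
\item Any allocation giving agent $1$ or agent $2$ at most one item has value at most $8$.
\item Giving both of them at least two items leaves at most one for agent $3$.
\item Hence the only allocation of value above $8$ is $(2,2,1)$, of value $9$, and it does not dominate $(1,1,2)$.
\item Your repair from $(1,1,2)$ ends at $(2,1,2)$ or $(1,2,2)$, of value $8$.
\end{itemize}
The cost accounting of this route is also shaky. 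The paper's convention makes single inversions $f_i^{-1}(y)$ free, but not the root $\lambda_t$ of a sum of inverses, which you would need exactly to get the floors right.

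So the fallback is the actual proof, and it is correct. In the case $v<\mu^*_t$ you already showed that every optimal $z\ge y$ has $z_j\ge y_j+1$, so the exchange sub-case is vacuous. Drop it, since its stated justification (agent $i$ keeps at least $y_i$ items) would not by itself give value at least $\mu^*_t$.

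For the time bound, rerunning from $y=0$ for every $t$ costs $\mathcal{O}(mn+m^2\log n)$ in total. That exceeds $\mathcal{O}(mn\log n)$ when $m\gg n$. What you need is the single incremental pass. The greedy's choices do not depend on $t$, so your invariant for target $t$ applies to the first $t$ steps of one run. Its prefixes are therefore optimal for all $t\le m$, at total cost $\mathcal{O}(n+m\log n)$, plus $\mathcal{O}(mn)$ to write out all the allocations.

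This fallback is in fact the paper's algorithm in other clothing. Merging the sorted lists $S_i=(f_i(0)/w_i,\dots,f_i(m)/w_i)$ and advancing a counter from the smallest term hands each next item to an agent of smallest current ratio.

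What differs is the correctness argument. The paper proves a threshold claim: if $\sum_i\lceil f_i^{-1}(\mu w_i)\rceil=m$, then the vector of ceilings is maximin. It then needs a separate Case 2 for values of $m$ that the counter skips because of ties, and this is where invertibility and surjectivity enter. Your extendability invariant handles ties uniformly and needs only monotonicity of the $f_i$ and their values at integers. It makes the divisible relaxation unnecessary, and it shows that $m$ heap operations suffice instead of a merge of all $(m+1)n$ terms.
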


\noindent \begin{proof}
The maximin welfare for the scenario is the maximum $\lambda$ such that there exists an allocation vector $(x_1, \ldots, x_n)$ with ${\sum}_i x_i =m$ such that $\lambda \leq f_i(x_i)/w_i$, for each $i$. If all variables $x_i$ are allowed to take non-negative real numbers then by Proposition \ref{indiv}, $\lambda$ is achieved when $x_i=f_i^{-1}(\lambda w_i)$. But since the items are indivisible then $x_i$'s should be non-negative integers. We begin with a claim.

\noindent {\bf Claim.} Let $\lambda$ be the maximum weighted Rawlsian welfare for $S$ and $\mu$ be a non-negative number such that ${\sum}_i \lceil f_i^{-1}(\mu w_i)\rceil =m$. Then $\lambda=\min_i f_i\big(\lceil f_i^{-1}(\mu w_i)\rceil\big)/w_i$.

\noindent Let $x_i=\lceil f_i^{-1}(\mu w_i)\rceil$, for each $i\in [n]$. Note that $x_i$ is well-defined since $\lceil f_i^{-1}(\mu w_i)\rceil \leq m$ and hence $0\leq \mu w_i\leq f_i(m)$. Assume on the contrary that for some $j\in [n]$, $\lambda > f_j\big(\lceil f_j^{-1}(\mu w_j)\rceil\big)/w_j \geq \mu$. There exists an allocation vector $(y_1, \ldots, y_n)$ such that for some $k$, $f_k(y_k)/w_k = \lambda > \mu$. It follows that for each $i$, $f_i(y_i)/w_i > \mu$ or $y_i > f_i^{-1}(\mu w_i)$. Since each $y_i$ is integer then $y_i\geq \lceil f_i^{-1}(\mu w_i)\rceil=x_i$. But the total sums in both sides are equal to $m$. Hence, $y_i=x_i$, for each $i$ and then $\lambda= {\min}_i f_i(y_i)/w_i={\min}_i f_i(x_i)/w_i$. This contradiction proves the claim.

\noindent Now, in the equation ${\sum}_i \lceil f_i^{-1}(\mu w_i)\rceil =m$, consider $\mu$ as an indeterminate and solve the equation for $\mu$. In order to obtain an efficient procedure for solving the equation we form the following non-decreasing sequence $S_i$ for each $i\in [n]$. $$S_i=\big(\frac{f_i(0)}{w_i}, \frac{f_i(1)}{w_i}, \ldots, \frac{f_i(m)}{w_i}\big).$$
\noindent We use a counter which counts the elements in each $S_i$ by passing the terms from left to right. Fix an arbitrary $i$ and assume that the counter counts exactly $p$ terms in $S_i$. Let $t$ be such that $f_i(p-1)/w_i < t < f_i(p)/w_i$. Then $p-1 < f_i^{-1}(t w_i) < p$. This means that the value of $i$-th summand in ${\sum}_i \lceil f_i^{-1}(t w_i)\rceil$ is $p$. We obtain a general result concerning the main sum as follows. Merge the sequences $S_1, \ldots, S_n$ and obtain a sorted non-decreasing sequence $T$ possibly with repeated terms (i.e. cases such as $f_i(p)/w_i=f_j(q)/w_j$ for some $i,j,p,q$). The counter counts the elements of $T$ from the smallest element to the higher terms such that a repeated term with multiplicity say $\ell$ is counted $\ell$ times. Now, assume that in some step, the counter counts say $q$ terms in $T$. Let $q={\sum}_i q_i$, where the counter counts $q_i$ terms in $S_i$. Apply the latter argument for each $(i,q_i)$ and obtain $t$ between the last counted term and the first non-counted term in $T$ such that ${\sum}_i \lceil f_i^{-1}(t w_i)\rceil=q$.

\noindent Converse of the above fact is also valid. First note that if there exists $t$ satisfying ${\sum}_i \lceil f_i^{-1}(t w_i)\rceil=q$ then there exists $t'\leq t$ such that ${\sum}_i \lceil f_i^{-1}(t' w_i)\rceil=q$ and $f_i^{-1}(t' w_i)$ is not integer. The reason is that the multiset $\{\{f_i(j)/w_i: 1\leq i\leq n, 0\leq j\leq m\}\}$ is finite. Let $\lceil f_i^{-1}(t' w_i)\rceil=q_i$. Then $q_i-1< f_i^{-1}(t w_i) < q_i$ and then $f_i(q_i-1)/w_i < t < f_i(q_i)/w_i$, for each $i\in [n]$. This means that at some step, the counter counts exactly $q$ terms in $T$. Note that because of counting the multiplicities, the set consisting of the numbers counted by the counter is not necessarily a continuous subset of integers. There are then two possibilities for the values of the counter.

\noindent Case 1. In some step of the counting, the counter counts $m$ terms in sequence $T$.

\noindent In this case recall $t$ from the previous paragraph satisfying ${\sum}_i \lceil f_i^{-1}(t w_i)\rceil =m$. Let $\lceil f_i^{-1}(t w_i)\rceil=x_i$. By the claim $(x_1, \ldots, x_n)$ is a maximin allocation for the scenario. Suppose that the counter counted $c_i$ elements in $S_i$. Then ${\sum}_i c_i=m$. Since $c_i$ elements have been passed by the counter then $f_i(c_i-1)/w_i < t< f_i(c_i)/w_i$. This implies that $x_i=\lceil f_i^{-1}(t w_i)\rceil = c_i$. In fact we have proved that $f_i(x_i-1)/w_i < t< f_i(x_i)/w_i$, for each $i$. It follows that the smallest term in $T$ not counted by the counter is ${\min}_i f_i(x_i)/w_i$. But the latter term is the maximin welfare of the scenario.

\noindent Case 2. $m$ does not belong to the set of numbers counted by the counter.

\noindent In this case let $m'\leq m$ be the largest integer counted by the counter. Let $\lambda'$ (resp. $\lambda$) be the maximin welfare corresponding to $m'$ (resp. $m$) items. We prove that $\lambda=\lambda'$. Otherwise, $\lambda > \lambda'$. By Case 1, there exists $t$ such that
$m'={\sum}_i \lceil f_i^{-1}(t w_i)\rceil$ and $\lambda'=f_j\big(\lceil f_j^{-1}(t w_j)\rceil\big)/w_j\geq t$. Hence, $m'\leq {\sum}_i \lceil f_i^{-1}(\lambda' w_i)\rceil < {\sum}_i \lceil f_i^{-1}(\lambda w_i)\rceil$. On the other hand, let $(x_1, \ldots, x_n)$ be a maximin allocation of $m$ items. Then $\lambda \leq f_i(x_i)/w_i$ and then $\lceil f_i^{-1}(\lambda w_i)\rceil \leq x_i$, for each $i\in [n]$. It follows that ${\sum}_i \lceil f_i^{-1}(\lambda w_i)\rceil \leq m$. But Case 2 implies $m\not= {\sum}_i \lceil f_i^{-1}(\lambda w_i)\rceil$. It follows that $m''={\sum}_i \lceil f_i^{-1}(\lambda w_i)\rceil$ satisfies Case 1 and $m'<m''<m$. This contradicts the maximality of $m'$. Then the maximin welfare with $m$ items equals to the maximin welfare with $m'$ items. Using Case 1, consider a maximin allocation with $m'$ items. It follows that there are more than $m-m'$ agents such as $A_{i_1}, \ldots, A_{i_p}$, $p>m-m'$ such that $f_{i_j}(x_{i_j})/w_{i_j}$ is the maximin welfare in $S$ with $m$ (and also $m'$) items. Offer exactly one item (from the remaining $m-m'$ items) to arbitrary $m-m'$ agents from those agents.

\noindent We discuss the complexity of the algorithm. Assume that for a single non-negative integer $p$, $f(p)$ is computed using ${\mathcal{O}}(1)$ arithmetic operations. Since $f$ is increasing then it follows that the sorted sequence $S_i$ is determined with time complexity ${\mathcal{O}}(m)$. We have $n$ sequences and to sort the whole elements takes ${\mathcal{O}}(mn\log n)$ time steps. The counter counts at most $mn$ times. The previous argument provides a maximin allocation with exactly $t$ items for each $t\leq m$. The overall complexity is ${\mathcal{O}}(mn\log n)$.
\end{proof}

\noindent The following lemma will be used in the next results.

\begin{lemma}
Given a $(1;F,W)$-scenario $S$ under the assumptions of Theorem \ref{R1item}, let $(x_1, \ldots, x_n)$ with ${\sum} x_i=m$ be a maximin allocation for $S$ provided by Theorem \ref{R1item} such that ${\sum}_i \lceil f_i^{-1}(t w_i)\rceil =m$ for some positive $t$. Then for each $i,j$, $f_i(x_i)/w_i > f_j(x_j-1)/w_j$.\label{lem}
\end{lemma}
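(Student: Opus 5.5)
The plan is to exploit the sandwich property that comes out of Case 1 in the proof of Theorem \ref{R1item}. The allocation there is $x_i=\lceil f_i^{-1}(t w_i)\rceil$, and for each $i$ it satisfies
$$\frac{f_i(x_i-1)}{w_i} < t < \frac{f_i(x_i)}{w_i}.$$
Once this two-sided inequality is available for every agent, the lemma follows by chaining. For any $i,j$ we get $f_j(x_j-1)/w_j < t < f_i(x_i)/w_i$, which gives $f_i(x_i)/w_i > f_j(x_j-1)/w_j$. If $x_j=0$, the term $f_j(x_j-1)$ is not defined, and the inequality should be read as vacuous, or as comparing against $-\infty$. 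I would note this convention explicitly.

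So the real work is to justify the sandwich for the given $t$, in the form the statement supplies. First, as in the converse paragraph of the proof of Theorem \ref{R1item}, I would replace $t$ by some $t'\le t$ such that $\sum_i \lceil f_i^{-1}(t'w_i)\rceil=m$ still holds and no $f_i^{-1}(t'w_i)$ is an integer. This is possible because the multiset $\{f_i(j)/w_i\}$ is finite: decreasing $t$ slightly below any breakpoint that equals it does not change the ceilings unless a value $f_i(p)/w_i$ is crossed. Here I would argue directly. If $f_i^{-1}(tw_i)=p$ is an integer, then $\lceil\cdot\rceil=p$ already, and lowering $t$ a little keeps the ceiling at $p$, because $f_i^{-1}$ is continuous and increasing by the surjectivity hypothesis. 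The ceilings of the non-integer terms are also unchanged for a small enough decrease. Hence the vector $(x_i)$ is unchanged.

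Next, with $t'$ fixed, I use $x_i-1<f_i^{-1}(t'w_i)<x_i$ and apply the increasing function $f_i$, then divide by $w_i>0$. This gives the strict sandwich $f_i(x_i-1)/w_i<t'<f_i(x_i)/w_i$ for every $i$. The chaining argument above then finishes the proof.

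The main obstacle I expect is the perturbation step. It needs continuity of $f_i^{-1}$ on $[0,f_i(m)]$, which follows from $f_i$ being strictly increasing and surjective onto an interval. It also needs the decrease to be small enough that no other breakpoint $f_k(q)/w_k$ lies in $[t',t)$. I would choose $t'$ strictly between $t$ and the largest breakpoint below $t$, where the breakpoints are the values $f_k(q)/w_k$ that are less than $t$. Alternatively, if the allocation of Theorem \ref{R1item} is assumed to come from the counter, the sandwich was already established there verbatim, and I would simply cite it.
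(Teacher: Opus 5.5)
Your proposal is correct and is essentially the paper's argument: both prove the lemma by chaining $f_j(x_j-1)/w_j < t \le f_i(x_i)/w_i$ through the given $t$. The perturbation to $t'$ is unnecessary, because $\lceil y\rceil - 1 < y \le \lceil y\rceil$ already makes the left inequality strict for the original $t$ (this is what the paper's integer/non-integer case split amounts to), and the case $x_j=0$ cannot arise, since $t>0$ and $f_j(0)=0$ force $x_j\ge 1$.
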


\noindent \begin{proof}
By the proof of Theorem \ref{R1item}, $x_i=f_i(\lceil f_i^{-1}(t w_i) \rceil)$ for each $i\in [n]$. We prove that for any two functions $f_i$ and $f_j$, $f_i(\lceil f_i^{-1}(t w_i) \rceil)/w_i> f_j(\lceil f_j^{-1}(t w_j) \rceil-1)/w_j$. For this end, assume that $f_j^{-1}(t w_j)=\ell +p$ for some integer $\ell$ and $0< p <1$. It follows that the right side is $f_j(\ell)/w_j$. The left side is at least $t$. It is enough now to prove $t> f_j(\ell)/w_j$. The latter inequality is equivalent to $\ell < f_j^{-1}(t w_j)=\ell +p$, as desired. In case that $f_j^{-1}(t w_j)=\ell$ is integer the intended inequality is reduced to $t > f_j(\ell -1)/w_j$ or $t w_j > f_j(\ell-1)$, which clearly holds.
\end{proof}

\noindent Leximin as a social welfare ordering has distinguished fairness and welfare properties \cite{S}. It was proved that a Leximin allocation is EQX for additive scenarios \cite{FSVX}.

\begin{prop}
With the assumptions of Theorem \ref{R1item}, the algorithm of Theorem \ref{R1item} provides a weighted Leximin allocation for $(1;F,W)$-scenarios.\label{leximin}
\end{prop}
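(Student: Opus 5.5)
The plan is to show that the allocation $(x_1,\ldots,x_n)$ produced by the algorithm of Theorem \ref{R1item} cannot be Leximin-improved. Equivalently, any allocation $(y_1,\ldots,y_n)$ with $\sum_i y_i=m$ whose sorted vector of weighted utilities $(f_i(y_i)/w_i)$ is lexicographically at least that of $x$ must coincide with $x$ in sorted value. The key tool is the separation property of Lemma \ref{lem}: in Case 1 of the algorithm, i.e.\ when $\sum_i \lceil f_i^{-1}(t w_i)\rceil=m$ for some $t$, we have $f_i(x_i)/w_i > f_j(x_j-1)/w_j$ for all $i,j$. So every agent's current weighted value exceeds every agent's weighted value after losing one item.

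The main argument is an exchange argument. Suppose $y\neq x$ is a Leximin improvement of $x$. Since both allocations distribute $m$ items, there are agents $r,s$ with $y_r>x_r$ and $y_s<x_s$, hence $y_s\le x_s-1$. Consider the set $D=\{s: y_s<x_s\}$ and let $\beta=\min_{s\in D} f_s(y_s)/w_s \le \min_{s\in D} f_s(x_s-1)/w_s$. By Lemma \ref{lem}, $\beta<f_i(x_i)/w_i$ for every $i$, so $\beta<\min_i f_i(x_i)/w_i=\lambda$.

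I would then compare the two sorted vectors entry by entry. Every agent not in $D$ has $y_i\ge x_i$, so its $y$-value is at least its $x$-value. The agents in $D$ all have $y$-values strictly below their $x$-values, and at least one of them has $y$-value $\beta<\lambda$. Let $k$ be the number of $y$-values that are at most $\beta$. Then the $k$-th smallest entry of the $y$-vector is at most $\beta$, while the $k$-th smallest entry of the $x$-vector is at least $\lambda>\beta$. More carefully, I would take the first position where the two sorted vectors differ and argue that the $y$-entry there is smaller. The cleanest route is to observe that the smallest $y$-value, $\min_i f_i(y_i)/w_i\le\beta<\lambda$, is already strictly smaller than the smallest $x$-value. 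Thus the very first coordinate of $y$'s sorted vector is lower, so $y$ is lexicographically worse and cannot be an improvement. If instead $y=x$ as a vector, there is nothing to prove.

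This shows directly that in Case 1 the allocation $x$ is Leximin optimal.

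Case 2 is the expected obstacle. There the algorithm takes a Case 1 allocation $x'$ for $m'<m$ items and gives one extra item to $m-m'$ of the agents attaining the minimum $\lambda$, and Lemma \ref{lem} applies only to $x'$. My plan is to apply the same exchange reasoning to the final $x$. For agents that received an extra item, losing one item returns them to $x'_i$, whose weighted value is exactly $\lambda$. For the other agents, losing one item yields a value strictly below $\lambda$ by Lemma \ref{lem}.

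Given $y\ne x$, take $s$ with $y_s<x_s$. If $s$ did not receive an extra item, then $\min_i f_i(y_i)/w_i<\lambda=\min_i f_i(x_i)/w_i$, because some agent still sits at $\lambda$ in $x$ (there were more than $m-m'$ minimizers). So $y$ is worse. Otherwise every such $s$ received an extra item and now has $y_s=x'_s$, value $\lambda$.

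The remaining comparison counts how many entries equal $\lambda$ in each sorted vector. I would show that $y$ has at least as many $\lambda$-entries as $x$ and that the entries above differ only by permutation. The point is that the items freed from agents in $D$ must go to agents whose next increment, by Lemma \ref{lem} and the Case 1 characterization of $x'$, cannot lift the values. If a full treatment of this case proves messy, I would reduce it to the observation that Case 2 arises only from ties, and handle it by perturbing $m'$ to the next counted value.
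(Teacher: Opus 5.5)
Your Case 1 is the paper's argument. For $y\neq x$ some $s$ has $y_s\le x_s-1$, and Lemma~\ref{lem} gives $f_s(y_s)/w_s<\min_i f_i(x_i)/w_i$, so $y$ already loses in the first sorted coordinate.

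The gap is in Case 2. You analyse the rule of Theorem~\ref{R1item}, where the $m-m'$ extra items go to \emph{arbitrary} minimizers. You then plan to show that a competitor $y$ with the same number of $\lambda$-entries differs from $x$ above $\lambda$ only by a permutation. That step is false, and so is the proposition under the arbitrary rule.

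Take $n=2$, $w_1=w_2=1$, $m=3$. Let $f_1,f_2$ be piecewise linear with values $0,1,\tfrac{3}{2},2$ and $0,1,2,3$ at $0,1,2,3$; both are strictly increasing, surjective, and even concave. Then $\sum_i\lceil f_i^{-1}(t)\rceil$ jumps from $2$ to $4$, so $m'=2$, $x'=(1,1)$, $\lambda=1$, and both agents are minimizers. The arbitrary rule may output $(2,1)$, with sorted vector $(1,\tfrac{3}{2})$. But $(1,2)$ has sorted vector $(1,2)$, so it is a Leximin improvement.

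Your heuristic that freed items ``cannot lift the values'' fails exactly here. An item moved from one minimizer to another lifts the receiver above $\lambda$, possibly higher than the giver had reached. Perturbing $m'$ does not change this.

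What is missing is the paper's refinement of the rule. Among the minimizers $P$ of $x'$, give the extra items to the $m-m'$ agents with the largest $f_i(x'_i+1)/w_i$. With that choice your exchange argument closes as follows.
\begin{itemize}
\item If $\min_i f_i(y_i)/w_i\ge\lambda$, Lemma~\ref{lem} applied to $x'$ forces $y_i\ge x'_i$ for all $i$. So $y=x'+e$ with $e\ge 0$ and $\sum_i e_i=m-m'$.
\item The number of $\lambda$-entries of $y$ is $p-|\{i\in P: e_i\ge 1\}|\ge p-(m-m')$. Having more $\lambda$-entries than $x$ is lexicographically worse. Equality holds only when $e$ is the indicator of an $(m-m')$-subset $Q'\subseteq P$.
\item In that case, the part of the sorted vector above $\lambda$ is the sorted multiset $\{f_i(x'_i)/w_i: i\notin P\}\cup\{f_i(x'_i+1)/w_i: i\in Q'\}$. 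This is dominated coordinatewise by the multiset obtained from the $m-m'$ largest next values.
\end{itemize}
So the statement holds for the refined algorithm, not for an arbitrary choice of minimizers, and your Case 2 must use that choice.
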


\noindent \begin{proof}
The proof is divided into two cases.

\noindent Case 1. There exists $t$ such that ${\sum}_i \lceil f_i^{-1}(t w_i)\rceil =m$.

\noindent Let $x_i=\lceil f_i^{-1}(t w_i) \rceil$, $i=1, \ldots, n$ and assume without loss of generality that $f_1(x_1)/w_1\leq f_1(x_2)/w_2\leq \cdots \leq f_n(x_n)/w_n$ so that $f_1(x_1)/w_1$ is the weighted maximin welfare in the allocation $B=(x_1, \ldots, x_n)$. By Lemma \ref{lem}, $f_1(x_1)/w_1 > f_i(x_i-1)/w_i$, for each $i$. Let $B'$ be a lexicographically improvement of $B$. Assume that $B'$ offers $y_i$ items to $A_i$, $i=1, \ldots, n$. To get a contradiction we claim that $x_i=y_i$. Otherwise, since ${\sum}_i x_i = {\sum}_i y_i$ then there exists $j$ such that $y_j\leq x_j-1$. We have $f_j(y_j)\leq f_j(x_j-1)$ and hence $f_j(y_j)/w_j<f_1(x_1)/w_1$. It follows that the maximum Rawlsian welfare of $B'$ is less than that of $B$. This contradicts the fact that $B'$ is improvement of $B$. This proves the proposition in this case.

\noindent Case 2. Let $m'$ be the largest integer with $m'< m$ satisfying ${\sum}_i \lceil f_i^{-1}(t w_i)\rceil =m'$ for some $t>0$.

\noindent Let $(x_1, \ldots, x_n)$ be a maximin allocation with $m'$ items such that $f_1(x_1)/w_1$ is the maximin welfare in the allocation. Let also $p$ be the largest integer such that $f_1(x_1)/w_1= f_1(x_2)/w_2= \cdots = f_p(x_p)/w_p$.

\noindent {\bf Claim.} $p>m-m'$.

\noindent Assume on the contrary that, $p\leq m-m'$. The fact that $m'$ is the largest integer with $m'< m$ such that the equation ${\sum}_i \lceil f_i^{-1}(t w_i)\rceil =m'$ has a solution for $t$ implies that no allocation of at most $m$ items has maximin welfare strictly greater than $f_1(x_1)/w_1$. From the other side, $p\leq m-m'$. We have $m-m'$ extra items for allocation. Since $m-m' \geq p$ then we have enough items to offer at least one item to each $A_i$, $i=1, \ldots, p$ from the remaining $m-m'$ items. By this allocation, the utility of each $A_i$ is strictly increased and becomes greater than $f_i(x_i)$. In other words, we obtain an allocation with at most $m$ items such that its minimum relative Rawlsian welfare is greater than $f_1(x_1)/w_1$, a contradiction. This proves the claim.

\noindent It follows from the previous paragraph and by $m-m'<p$ that in order to obtain a Leximin allocation for the scenario we should allocate at most one item to each agent in $\{A_1, \ldots, A_p\}$. We form the vector $\big(f_1(x_1+1)/w_1, f_1(x_2+1)/w_2, \cdots, f_p(x_p+1)/w_p\big)$. Choose the $m-m'$ largest terms in the vector and assume that their indexes are $j_1, \ldots, j_{m-m'}$. Offer exactly one item to each agent $A_{j_k}$, $k=1, \ldots, m-m'$. The bundle of each agent $A_i$, $i\not\in \{j_1, \ldots, j_{m-m'}\}$ is left unchanged. Since we choose the first $m-m'$ largest terms from the latter vector, it is easily observed that the total allocation of $m$ items is a weighted Leximin allocation.
\end{proof}

\noindent For a positive $c$, we say a function $f:\mathbb{R}^{\geq 0}\rightarrow \mathbb{R}^{\geq 0}$ is $c$-multiplicative if for each $x, y$, $f(xy)=f(x)f(y)/c$. Let $f$ be a $c$-multiplicative function with $f(1)\not= 0$. Then $f(1)=c$ and $f(1)=f(x)f(1/x)/c$ or $f(1/x)=c^2/f(x)$. It follows that for each $x$ and $y$, $f(x/y)=cf(x)/f(y)$.

\begin{prop}
Let $S$ be a $(1; F, W)$-scenario, where $F=\{f_1, \ldots, f_n\}$ and $W=\{w_1, \ldots, w_n\}$.

\noindent (i) If $w_i=1$, for each $i$, then the scenario admits an EFX+PO allocation.

\noindent (ii) If $W$ is arbitrary weight vector and each $f_i$ is $c_i$-multiplicative (for some $c_i>0$) with $f_i(1)\not=0$ and $w_i$ belongs to the range of $f_i$ then $S$ admits an EFX allocation.\label{efx}
\end{prop}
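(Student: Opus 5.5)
For part (i) the plan is to write the EFX condition in terms of bundle sizes. When all weights are $1$ and the goods are identical, agent $A_i$ with $x_i$ items values a bundle of size $x_j$ at $f_i(x_j)$. Removing any single item from $A_j$'s bundle leaves a bundle of size $x_j-1$. So EFX says: for all $i,j$ with $x_j\ge 1$, $f_i(x_i)\ge f_i(x_j-1)$. Because $f_i$ is strictly increasing, this is equivalent to $x_i\ge x_j-1$.

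The balanced allocation therefore works: give every agent $\lfloor m/n\rfloor$ or $\lceil m/n\rceil$ items, for instance round-robin. Any two bundle sizes then differ by at most one, which gives EFX. Pareto optimality needs no extra work, since by Proposition \ref{pareto} every allocation of a $(k;F,W)$-scenario is PO. Part (i) is essentially immediate.

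For part (ii) the plan is to rescale each agent's utility so that the entitlement disappears. Since $w_i$ lies in the range of $f_i$, pick $a_i>0$ with $f_i(a_i)=w_i$. From the identity $f(x/y)=cf(x)/f(y)$ derived just before the proposition,
$$\frac{f_i(x)}{w_i}=\frac{f_i(x)}{f_i(a_i)}=\frac{1}{c_i}\,f_i\!\left(\frac{x}{a_i}\right).$$
So $A_i$'s weighted utility is an increasing function of the normalized share $x_i/a_i$. This suggests allocating the items greedily: at each step, give the next item to an agent minimizing $(x_i+1)/a_i$, or equivalently minimizing $x_i/a_i$. This is a weighted picking sequence in the style of apportionment divisor methods. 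By induction on the number of items, it keeps the invariant $x_i/a_i\ge (x_j-1)/a_j$ for all $i,j$ with $x_j\ge 1$: whenever $A_j$ received its last item, its normalized share before that item was minimal, and $A_i$'s share has not decreased since.

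Next I convert the invariant into WEFX. By monotonicity and multiplicativity,
$$\frac{f_i(x_i)}{w_i}=\frac{1}{c_i}f_i\!\left(\frac{x_i}{a_i}\right)\ \ge\ \frac{1}{c_i}f_i\!\left(\frac{x_j-1}{a_j}\right)=\frac{f_i(x_j-1)}{f_i(a_j)}.$$
The WEFX requirement is $f_i(x_i)/w_i\ge f_i(x_j-1)/w_j$. So what remains is to compare $f_i(a_j)$ with $w_j=f_j(a_j)$.

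This comparison is the step I expect to be the main obstacle, because in general $f_i(a_j)\neq f_j(a_j)$. My first attempt will be to read the weighted envy of $A_i$ toward $A_j$ as measured on $A_i$'s own scale. That is, $A_j$'s entitlement is expressed to $A_i$ as $f_i(a_j)$, the value to $A_i$ of $A_j$'s entitled share; under this reading the displayed inequality is exactly WEFX. If the statement is meant literally with $w_j$, I would instead modify the greedy rule so that the invariant is taken pairwise in $A_i$'s own scale, $f_i(x_i)/w_i\ge f_i(x_j-1)/w_j$. I would then try to show, using multiplicativity, that adding an item to an agent minimizing $x_i/a_i$ cannot break any of these pairwise inequalities. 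I would check whether the range hypothesis on $w_i$ is enough for that step, and if it is not, add the corresponding hypothesis.
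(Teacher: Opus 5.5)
Part (i) is the paper's argument: EFX reduces to $|x_i-x_j|\le 1$, and PO comes from Proposition~\ref{pareto}. For part (ii) your construction is also the paper's, in sequential form. The paper sets $t_i=f_i^{-1}(w_i)$ (your $a_i$), takes $x_i=\lceil\lambda t_i\rceil$ when some $\lambda$ gives $\sum_i\lceil\lambda t_i\rceil=m$, and patches the remaining case by hand. Its target inequality $\clubsuit$ is exactly your invariant $x_i/t_i\ge (x_j-1)/t_j$, and your greedy handles the leftover case more cleanly. One slip: minimizing $(x_i+1)/a_i$ is \emph{not} equivalent to minimizing $x_i/a_i$. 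The first is the Jefferson/D'Hondt rule, and it breaks your invariant: with $a_1=1$, $a_2=10$ it gives the first nine items to $A_2$, after which $x_1/a_1=0<(x_2-1)/a_2$. Your induction needs the $x_i/a_i$ rule.

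The obstacle you isolated is genuine, and it is exactly where the paper's proof fails. The paper asserts that $f_i(x_i)/w_i\ge f_i(x_j-1)/w_j$ is equivalent to $f_i(x_i/t_i)\ge f_i((x_j-1)/t_j)$. That silently uses $f_i(t_j)=w_j$, whereas only $f_j(t_j)=w_j$ is known. The step cannot be repaired, because (ii) is false when EFX is read as WEFX, as the paper's proof and introduction intend (unweighted EFX would follow from (i)). A strictly increasing $c$-multiplicative $f$ has $f(0)=0$ and $f(x)=cx^{\alpha}$ with $\alpha>0$. So the range hypothesis is vacuous, and WEFX for the pair $(i,j)$ reads $x_i\ge (w_i/w_j)^{1/\alpha_i}(x_j-1)$, with $c_i$ cancelling. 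For a counterexample, take $f_1(x)=\sqrt{x}$ and $f_2(x)=x$ (both $1$-multiplicative, even concave), with $w_1=4$, $w_2=1$, $m=8$. Then $A_1$ needs $x_1\ge 16(x_2-1)$, and $A_2$ needs $x_2\ge (x_1-1)/4$ when $x_1\ge 1$. If $x_2\ge 2$, the first condition forces $x_1\ge 16>m$. If $x_2\le 1$, then $x_1\ge 7$ and the second condition fails ($1<6/4$, $0<7/4$). So no WEFX allocation exists. Neither a modified greedy rule nor an added range-type hypothesis can rescue a proof of the literal statement.

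What your argument does prove is the variant in which $A_i$ measures $A_j$'s entitlement as $f_i(a_j)$. A correct literal version needs a common exponent, $\alpha_i=\alpha$ for all $i$. Then WEFX is $x_i/w_i^{1/\alpha}\ge (x_j-1)/w_j^{1/\alpha}$, and your greedy works with $a_i$ replaced by $w_i^{1/\alpha}$. Even then, the normalization $f_i^{-1}(w_i)=(w_i/c_i)^{1/\alpha}$ used by you and by the paper is wrong when the $c_i$ differ. For example, with $f_1(x)=x$, $f_2(x)=100x$, $w_1=w_2=1$ and $m=101$, it yields $(100,1)$, which is not EFX.
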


\noindent \begin{proof}
\noindent To prove $(i)$, let $y_1, \ldots, y_n$ be an EFX allocation for $S$. Then for an $i$ and each $j$, $f_i(x_i)\geq f_i(x_j-1)$, equivalently $x_i\geq  x_j-1$. Hence the allocation is EFX if and only if $|x_i-x_j|\leq 1$. It is clear that every positive integer $m$ is represented as $m={\sum}_i x_i$ satisfying $|x_i-x_j|\leq 1$, as desired.

\noindent To prove $(ii)$, we need a solution vector $(x_1, \ldots, x_n)$ such that for each $i,j$, $f_i(x_i)/w_i\geq f_i(x_j-1)/w_j$. Define positive real numbers $t_i$ as $t_i=f_i^{-1}(w_i)$. Then the inequality $f_i(x_i)/w_i\geq f_i(x_j-1)/w_j$ is equivalent to $f_i(x_i/t_i)\geq f_i((x_j-1)/t_j)$ or
$$\frac{x_i}{t_i} \geq \frac{x_j-1}{t_j}, \forall i, j \in [n].~~~~~\clubsuit$$
\noindent To explore existence of $(x_1, \ldots, x_n)$ satisfying $\clubsuit$, we first check existence of a positive real number $\lambda$ such that $x_i=\lceil \lambda t_i \rceil$ and ${\sum}_i \lceil \lambda t_i \rceil = m$. If such a $\lambda$ exists then $\lceil \lambda t_i \rceil/t_i \geq \lambda > (\lceil \lambda t_j \rceil -1)/t_j$. Hence, $\clubsuit$ holds in this case.

\noindent Otherwise, let $m' < m$ be a largest integer such that ${\sum}_i \lceil \lambda_0 t_i \rceil = m'$ has a solution for some $\lambda_0>0$. Write $\lceil \lambda_0 t_i \rceil=\lambda_0t_i +f_i$ for some $0\leq f_i<1$. Choose $\epsilon$ such that $\epsilon t_i<f_i$, for each $i$ with $f_i\not= 0$. Let ${\sum}_i \lceil (\lambda_0+\epsilon)t_i \rceil = m''$, where $m''$ is an integer with $m''>m$. Let $j\in [n]$ be an index such that $\lceil (\lambda_0+\epsilon)t_j \rceil \geq \lceil \lambda_0 t_j \rceil +1$. Write $\lceil (\lambda_0+\epsilon)t_j \rceil =\lambda_0t_j+\epsilon t_j+f_j(\epsilon)$, for some $0\leq f_j(\epsilon)<1$. Then
$\lambda_0t_j+\epsilon t_j+f_j(\epsilon)\geq \lambda_0t_j+f_j+1$ and equivalently $f_j+f_j(\epsilon)>\epsilon t_j +f_j(\epsilon)\geq f_j+1$. It follows that $f_j(\epsilon)>1$. This contradiction implies that $\lceil \lambda_0 t_j \rceil =\lambda_0t_j$. Now, there are at least $m-m'$ indexes (by counting multiplicities) such as $i_1, \ldots, i_k$ such that $\lceil (\lambda_0+\epsilon)t_{i_j} \rceil \geq \lceil \lambda_0 t_{i_j} \rceil +1$, for each $j=1, \ldots, k$, where $k=m-m'$. By the latter argument we have $\lceil \lambda_0 t_{i_j} \rceil =\lambda_0t_{i_j}$, for each $j=1, \ldots, k$. We add exactly one item to the bundle of each $A_{i_j}$. For each $i$, $1\leq i\leq n$ and $j$, $1\leq j\leq k$ we have $(\lceil \lambda_0 t_i\rceil)/ t_i \geq \lambda = (\lceil \lambda_0 t_{i_j}\rceil)/ t_{i_j}$. It implies that the allocation is $WEFX$ and the total number of allocated items is $m$.
\end{proof}

\noindent Recall that an allocation $B$ is WMMS fair if for each $i$, $u_{A_i}(B_i)\geq WMMS(A_i)$, where $WMMS(A_i)={\max}_{D} {\min}_j (w_i/w_j)u_{A_i}(D_j)$, where the maximum is over all allocations $D$ of $g_1, \ldots, g_m$ to the agents and $D_j=D^{-1}(A_j)$.

\begin{prop}
Existence of an WMMS allocation for a given $(1;W,F)$scenario $S$ can be decided in a polynomial time.\label{mms}
\end{prop}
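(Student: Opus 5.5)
The approach is to reduce the existence question to computing each agent's share number separately, and then to a single feasibility check on integer thresholds. Since every $f_i$ is strictly increasing and the items are identical, an allocation is a vector $(x_1,\ldots,x_n)$ of non-negative integers with $\sum_j x_j=m$. For agent $A_i$ we have
$$WMMS(A_i)=\max_{(d_1,\ldots,d_n):\sum_j d_j=m}\ \min_j \frac{w_i}{w_j} f_i(d_j).$$
This is itself a weighted maximin problem of exactly the type handled in Theorem \ref{R1item}. Indeed, set $g_j:=f_i$ for every $j$ and use the entitlement vector $(w_j/w_i)_j$. Then $\min_j g_j(d_j)/(w_j/w_i)$ is precisely the quantity inside the maximum. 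The hypotheses of Theorem \ref{R1item} (surjectivity onto $[0,f_i(m)]$) hold for $f_i$, so that theorem computes $WMMS(A_i)$ in time $\mathcal{O}(mn\log n)$. If one prefers to avoid the surjectivity assumption, I would instead run a direct search: $WMMS(A_i)$ is one of the at most $n(m+1)$ values $(w_i/w_j)f_i(d)$. For a candidate value $v$, feasibility means $\sum_j \min\{d: (w_i/w_j)f_i(d)\ge v\}\le m$, and this can be checked by scanning $d=0,\ldots,m$ for each $j$. Sorting the candidates and binary searching gives a polynomial bound either way.

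Next, with $\theta_i:=WMMS(A_i)$ in hand for every $i$, I compute the least integer $r_i\in\{0,\ldots,m\}$ with $f_i(r_i)\ge\theta_i$. Such an $r_i$ exists because $\theta_i\le f_i(m)$, since $w_i/w_i=1$ appears in the minimum. By monotonicity of $f_i$, the condition $f_i(x_i)\ge\theta_i$ holds iff $x_i\ge r_i$. Hence a WMMS allocation exists iff $\sum_i r_i\le m$. If this holds, I give $r_i$ items to each $A_i$ and hand the remaining $m-\sum_i r_i$ items to anyone; monotonicity keeps every agent above its share. If it fails, no integer vector can meet all thresholds. Computing the $r_i$ costs $\mathcal{O}(nm)$ evaluations, so the whole procedure is $\mathcal{O}(mn^2\log n)$, and it also outputs an allocation when one exists.

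The main obstacle is the first step: justifying that $WMMS(A_i)$ is computable in polynomial time from the weighted maximin machinery. The two key points are the reinterpretation with weights $w_j/w_i$ and the single common function $f_i$, and the fact that the optimum lies in the finite candidate set above. Both points follow from the integrality of bundle sizes. I would present the direct candidate-value search as the primary argument, since it needs only monotonicity, and mention Theorem \ref{R1item} as an alternative under its hypotheses.
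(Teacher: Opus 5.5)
Your proposal is correct and follows essentially the paper's route: the paper also computes each share $\mu_i$ by testing candidate levels $(w_i/w_j)f_i(\ell)$ against the threshold condition $\ell+\sum_{k\neq j}\lceil f_i^{-1}((w_k/w_j)f_i(\ell))\rceil\le m$, and then decides existence by checking $m\ge\sum_i\lceil f_i^{-1}(\mu_i)\rceil$, which is exactly your final step. Your integer scan for the thresholds (in place of $f_i^{-1}$) and the binary search over sorted candidates are minor refinements that need only monotonicity. The detour through Theorem \ref{R1item} is unnecessary, since its surjectivity hypothesis is not part of this statement.
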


\noindent \begin{proof}
The weighted maximin share of an agent $A_i$ (denoted simply by $\mu_i$) is determined as $\mu_i={\max}_{(x_1, \ldots, x_n)} {\min}_j (w_i/w_j)f_i(x_j)$. The task is decomposed into determining $\mu_{ij}={\max}_{(x_1, \ldots, x_n)} (w_i/w_j)f_i(x_j)$ (for each fixed $j\in [n]$) subject to $(w_i/w_j)f_i(x_j)\leq (w_i/w_k)f_i(x_k)$ or $\lceil f_i^{-1}\big((w_k/w_j)f_i(x_j)\big)\rceil \leq x_k$, for each $k\in [n]$. For a value $\ell \in \{0, \ldots, m\}$, we put $x_j=\ell$ and decide if there exist non-negative integers $e_1, \ldots, e_{k-1}, e_{k+1}, \ldots, e_n$ such that $\ell+{\sum}_{k\not=j} \big(\lceil f_i^{-1}\big((w_k/w_j)f_i(x_j)\big)\rceil +e_k\big) =m$. We find maximum $\ell$ such that the latter relation holds. This is computed by $\mathcal{O}(m)$ arithmetic operations. Clearly, $\mu_{ij}=(w_i/w_j)f_i(\ell)$. It follows that $\mu_i$ (resp. all $\mu_i$'s) is computed in time complexity $\mathcal{O}(mn)$ (resp. $\mathcal{O}(mn^2)$). After determining all shares $\mu_i$, the final step is to decide whether there exists an allocation $(y_1, \ldots, y_n)$ such that $f_i(y_i)\geq \mu_i$, for each $i\in [n]$. This is equivalent to $m\geq {\sum}_i \lceil f_i^{-1}(\mu_i)\rceil$.
\end{proof}

\section{Equitablity and compensation via identical coins}

\noindent $(1;F,W)$-scenarios with computable invertible functions admits allocations which are both EQX and Pareto optimal.

\begin{prop}
With the assumptions of Theorem \ref{R1item}, the algorithm of Theorem \ref{R1item} provides a {\rm WEQX+PO} allocation for $(1;F,W)$-scenarios.\label{weqx}
\end{prop}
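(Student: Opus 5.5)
Pareto optimality comes for free from Proposition \ref{pareto}, since every allocation of a $(1;F,W)$-scenario is PO. So the task reduces to showing that the allocation $(x_1,\ldots,x_n)$ produced by the algorithm of Theorem \ref{R1item} is WEQX. In a $(1;F,W)$-scenario all goods are identical and $f_j$ is strictly increasing, so removing any single good from $B_j$ (when $x_j\geq 1$) leaves utility $f_j(x_j-1)$. Hence WEQX amounts to
$$\frac{f_i(x_i)}{w_i}\geq \frac{f_j(x_j-1)}{w_j}\quad\mbox{for all } i,j \mbox{ with } x_j\geq 1.$$
I will verify this inequality along the same two cases used in the proofs of Theorem \ref{R1item} and Proposition \ref{leximin}.

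\emph{Case 1: there is $t>0$ with $\sum_i \lceil f_i^{-1}(tw_i)\rceil = m$.} The algorithm sets $x_i=\lceil f_i^{-1}(tw_i)\rceil$. Lemma \ref{lem} then gives exactly $f_i(x_i)/w_i > f_j(x_j-1)/w_j$ for all $i,j$, which is the required inequality in strict form. Nothing further is needed in this case.

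\emph{Case 2: no such $t$ exists for $m$.} Let $m'<m$ be the largest integer that is attained, with the Case-1 allocation $(x'_1,\ldots,x'_n)$ for $m'$ items. Lemma \ref{lem} applies to it, so $f_i(x'_i)/w_i > f_j(x'_j-1)/w_j$ for all $i,j$. The algorithm then gives one extra item to each of $m-m'$ agents chosen among those attaining the minimum value $\lambda=\min_i f_i(x'_i)/w_i$; by the proof of Theorem \ref{R1item} there are more than $m-m'$ such agents. Let $J$ be the set of agents receiving an extra item. I check the inequality by pairs:
\begin{itemize}
\item[(a)] If $j\notin J$, then $x_j=x'_j$. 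Since values only increase, $f_i(x_i)/w_i\geq f_i(x'_i)/w_i > f_j(x'_j-1)/w_j$.
\item[(b)] If $j\in J$, then $x_j-1=x'_j$, so $f_j(x_j-1)/w_j = f_j(x'_j)/w_j=\lambda$. Every agent $i$ has $f_i(x_i)/w_i\geq f_i(x'_i)/w_i\geq\lambda$, so the inequality holds, possibly with equality.
\end{itemize}
Thus WEQX holds in both cases, and PO holds by Proposition \ref{pareto}.

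The main obstacle is Case 2, specifically the fact that the extra items go only to agents at the minimum level $\lambda$. That fact is what makes step (b) work, so I would restate precisely how the algorithm of Theorem \ref{R1item} selects recipients. If the text instead used the Leximin refinement of Proposition \ref{leximin}, the recipients would still lie among $A_1,\ldots,A_p$, all of which are at level $\lambda$, so the same argument goes through.
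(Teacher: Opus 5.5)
Your proposal is correct and follows the paper's proof: Case 1 is exactly Lemma \ref{lem}, and PO follows from Proposition \ref{pareto}. For Case 2, which the paper dismisses as ``clearly'' WEQX, your split into agents that do and do not receive an extra item supplies the details the paper omits.
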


\noindent \begin{proof}
If there exists $t$ such that ${\sum}_i \lceil f_i^{-1}(t w_i)\rceil =m$ (Case 1 in the proof of Theorem \ref{R1item}), then $f_i(\lceil f_i^{-1}(t w_i) \rceil)/w_i$ is the utility of $A_i$ under the maximin allocation of the theorem. By Lemma \ref{lem}, for each $i,j$, $f_i(\lceil f_i^{-1}(t w_i) \rceil)/w_i> f_j(\lceil f_j^{-1}(t w_j) \rceil-1)/w_j$. It follows that the allocation is WEQX in this case. Otherwise, the allocation provided in Case 2 of the proof of Theorem \ref{R1item} is clearly an WEQX allocation.
\end{proof}

\noindent Let $S$ be a general scenario with agent set $A=\{A_1, \ldots, A_n\}$ and weight vector $W=(w_1, \ldots, w_n)$. Let $B: G\rightarrow A$ be an allocation of the goods over the agents. Each agent $A_i$ receives the bundle $B_i=B^{-1}(A_i)$. Define $b_i =u_i(B_i)$ for each $i\in [n]$. Let $p$ be an index such that $b_p/w_p= {\max}_i (b_i/w_i)$. Then weighted deficit of $B_i$ with respect to $B_p$ is $w_ib_p-w_pb_i\geq 0$. Define the ``total weighted deficit" corresponding to allocation $B$ as ${\sf twd}(B)={\sum}_{i=1}^n (w_ib_p-w_pb_i)$. Or, ${\sf twd}(B)=({\sum}_{i\not= p} w_i)b_p - w_p({\sum}_{i\not= p} b_i)$. Note that ${\sf twd}(B)=0$ if and only if $B$ is a weighted equitable allocation. Given a general scenario $S$ with only two agents with equal weights, it is $\NP$-hard to obtain an allocation which minimizes ${\sf twd}(B)$ over all $B$. To prove this fact let $\{\{u_1, \ldots, u_m\}\}$ be an arbitrary multiset (i.e. repeated elements are allowed). Define a scenario with agents $A_1$ and $A_2$ such that the utility of good $i$ is $u_i$ for both agents. Now, minimizing ${\sf twd}(B)$ is equivalent to divide $\{\{u_1, \ldots, u_m\}\}$ into two sub-multisets $P_1$ (as a bundle of $A_1$) and $P_2$ (as a bundle of $A_2$) such that $|({\sum}_{u_i\in P_1} u_i) - ({\sum}_{u_i\in P_2} u_i)|$ has smallest possible value. The latter problem is optimization version of the famous Partition Problem: given a non-empty finite set $P=\{p_i: i\in [n]\}$ of positive integers such that ${\sum}_i p_i=2T$, can $I$ be partitioned into two disjoint subsets $I_1$ and $I_2$ such that ${\sum}_{i\in I_1} p_i={\sum}_{i\in I_2} p_i$? The Partition Problem is $\NP$-hard \cite{GJ}. For a general scenario $S$, define a new weight dependent parameter as follows, where the minimum is taken over all allocations $B$ for $S$.
$$\Psi(S,W)= \min_B {\sf twd}(B).$$
\noindent By the previous argument to determine $\Psi(S,W)$ is $\NP$-hard but Theorem \ref{polypsi} provides a polynomial time algorithm to determine $\Psi(S,W)$ for $(1;F,W)$-scenarios $S$. In case that $w_1=\cdots=w_n=1$ we write $\Psi(S)$ instead of $\Psi(S,W)$. Note that $\Psi(S,W)=0$ if and only if $S$ admits a WEQ allocation. In order to tackle $\Psi(S,W)$ we define $\Psi_p(S,W)$, where $p\in [n]$ is an arbitrary and fixed index. Note that the following minimization is over all allocations $B$ for the scenario such that $u_i(B_i)/w_i \leq u_p(B_p)/w_p$, where as before $B_i$ denotes the bundle allocated to $A_i$ by $B$.

$$\Psi_p(S,W)= \min_{B: \frac{u_i(B_i)}{w_i} \leq \frac{u_p(B_p)}{w_p}} \sum_i \big[w_i(u_p(B_p))-w_p(u_i(B_i))\big].$$

\begin{prop}
$$\Psi(S,W)= \min_{p\in [n]} \Psi_p(S,W).$$\label{psik}
\end{prop}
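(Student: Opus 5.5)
We prove the equality by showing two inequalities, $\Psi(S,W)\le \min_p \Psi_p(S,W)$ and $\Psi(S,W)\ge \min_p \Psi_p(S,W)$. Both follow by comparing the feasible sets of the two minimizations and checking that the objectives agree on the relevant allocations. Throughout we use $b_i=u_i(B_i)$. We also use the convention, implicit in the definition of ${\sf twd}$, that ${\sf twd}(B)=\sum_i (w_ib_p-w_pb_i)$ for an index $p$ attaining $\max_i b_i/w_i$.

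\emph{Step 1: ${\sf twd}(B)$ does not depend on which maximizer $p$ is chosen.} Suppose $p$ and $q$ both attain the maximum, so $b_p/w_p=b_q/w_q=:\lambda$. Then
\[
\sum_i (w_ib_p-w_pb_i)=w_p\sum_i (w_i\lambda-b_i)
\qquad\text{and}\qquad
\sum_i (w_ib_q-w_qb_i)=w_q\sum_i (w_i\lambda-b_i).
\]
These two quantities differ by the factor $w_p/w_q$, so they need not be equal. Since ${\sf twd}$ is therefore well defined only after fixing a choice of $p$, I would adopt the convention that whenever $B$ lies in the feasible set of $\Psi_p$, the index $p$ may be used to evaluate ${\sf twd}(B)$. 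Equivalently, I would read ${\sf twd}(B)$ as the value $\sum_i(w_ib_p-w_pb_i)$ for the chosen maximizer $p$. Making this convention explicit and consistent is the main obstacle. With it in place, the objective of $\Psi_p$ at a feasible $B$ is exactly ${\sf twd}(B)$ computed with index $p$.

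\emph{Step 2: $\Psi\ge\min_p\Psi_p$.} Take an allocation $B^*$ attaining $\Psi(S,W)$; one exists because the set of allocations is finite. Let $p$ be the maximizer used in ${\sf twd}(B^*)$. Then $b_i/w_i\le b_p/w_p$ for every $i$, so $B^*$ is feasible for $\Psi_p$. Its $\Psi_p$-objective equals ${\sf twd}(B^*)$, which gives $\Psi_p\le\Psi$.

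\emph{Step 3: $\Psi\le\min_p\Psi_p$.} Fix $p$ and let $B$ be feasible for $\Psi_p$, i.e.\ $b_i/w_i\le b_p/w_p$ for all $i$. Then $p$ is a maximizer of $b_i/w_i$. By the convention of Step 1, ${\sf twd}(B)$ equals the $\Psi_p$-objective at $B$, so $\Psi\le{\sf twd}(B)$. Taking the minimum over all feasible $B$ gives $\Psi\le\Psi_p$. If the feasible set of $\Psi_p$ is empty, then $\Psi_p=+\infty$ and there is nothing to prove.

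Combining Steps 2 and 3 over all $p\in[n]$ yields $\Psi(S,W)=\min_{p\in[n]}\Psi_p(S,W)$.
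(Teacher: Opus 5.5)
Your two-inequality structure is the paper's: your Step 2 is exactly its argument (take an optimal $B$ and a maximizer $p$, so $\Psi_p\le{\sf twd}(B)=\Psi$), and your Step 3 is the direction the paper simply calls ``clear''. Your Step 1 diagnosis is correct, and it is precisely what that ``clear'' skips. If $b_p/w_p=b_q/w_q=\lambda$ is the maximum, the two candidate values of ${\sf twd}(B)$ are $w_pD$ and $w_qD$, where $D=\sum_i(w_i\lambda-b_i)$.

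Your resolution, however, does not work as written. The rule ``whenever $B$ is feasible for $\Psi_p$, use $p$'' gives two different numbers for the same $B$ when $B$ is feasible for both $\Psi_p$ and $\Psi_q$ with $w_p\ne w_q$ and $D>0$. So ${\sf twd}$ is not a function, and $\min_B{\sf twd}(B)$ is undefined. Your ``equivalent'' reading, with one chosen maximizer per $B$, is a different convention, and under it Step 3 fails. If $B$ is feasible for $\Psi_p$ but the chosen maximizer is $q$ with $w_q>w_p$, then ${\sf twd}(B)=(w_q/w_p)\cdot(\Psi_p\text{-objective at }B)$, which is strictly larger when $D>0$. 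The identity itself then breaks, not just the proof. Take three agents with weights $(1,2,1)$, two identical goods, and $f_1(x)=x$, $f_2(x)=2x$, $f_3(x)=x/2$. The allocation $(1,1,0)$ has maximizers $1$ and $2$; it evaluates to $1$ at $p=1$ and to $2$ at $p=2$. The other five allocations give $6,8,3,5/2,3$. So $\min_p\Psi_p=\Psi_1=1$, while the larger-weight tie-break gives $\Psi=2$.

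The fix is to define
${\sf twd}(B)=\min_{p\in\arg\max_i b_i/w_i}\sum_i(w_ib_p-w_pb_i)$.
In Step 2, take $p$ to be a maximizer attaining this minimum. In Step 3, you then need only the inequality ${\sf twd}(B)\le\sum_i(w_ib_p-w_pb_i)$ for every maximizer $p$, not an equality. Both minimizations now range over the same set of pairs $(p,B)$ with $p$ a maximizer for $B$, so the proposition follows.
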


\noindent \begin{proof}
It is clear that $\Psi(S,W)\leq {\min}_p \Psi_p(S,W)$. For the converse inequality, assume that $\Psi(S,W)$ occurs with an allocation $B$. Suppose $p$ is such that $u_p(B_p)/w_p = \max_i u_i(B_i)/w_i$. It follows that $\Psi_p(S,W)\leq {\sf twd}(B)= \Psi(S,W)$.
\end{proof}

\noindent Given a general scenario $S$, an EQX allocation $B$ for $S$ does not necessarily imply ${\sf twd}(B)=\Psi(S)$ and vise versa. For this purpose just consider a $2\times 3$ utility table with first row all $1$ and the other all $6$. Then $\Psi(S)$ is achieved by allocating all three items of utility 1 to the agent. Figure \ref{bmax} presents a $(1;F,W)$-scenario on $4$ agents with additive utilities, where all weights are one. Three values $\Psi_1(S), \Psi_2(S), \Psi_3(S)$ are computed in the tables from up to down. Each table provides an allocation for which $\Psi_i(S)$ is occurred.

\begin{figure}[h]
\begin{center}
\begin{tabular}{|c|c|c|c|c|c|c|c|}
  \hline
  $\overrightarrow{A_1}$ & $\fbox{2}$ & $\fbox{2}$ & $2$ & $\fbox{2}$ & $\fbox{2}$ & $2$ & $2$\\[0.64eM]
  \hline
   & $4$ & $4$ & $\fbox{4}$ & $4$ & $4$ & $4$ & $4$ \\[0.65eM]
  \hline
   & $7$ & $7$ & $7$ & $7$ & $7$ & $\fbox{7}$ & $7$\\[0.65eM]
  \hline
  & $7$ & $7$ & $7$ & $7$ & $7$ & $7$ & $\fbox{7}$ \\[0.65eM]
  \hline
  \end{tabular}\\
  \vspace*{1cm}
  \begin{tabular}{|c|c|c|c|c|c|c|c|}
  \hline
   & $2$ & $\fbox{2}$ & $\fbox{2}$ & $2$ & $2$ & $2$ & $\fbox{2}$ \\[0.64eM]
  \hline
  $\overrightarrow{A_2}$  & $\fbox{4}$ & $4$ & $4$ & $\fbox{4}$ & $4$ & $4$ & $4$ \\[0.65eM]
  \hline
  & $7$ & $7$ & $7$ & $7$ & $\fbox{7}$ & $7$ & $7$\\[0.65eM]
  \hline
  & $7$ & $7$ & $7$ & $7$ & $7$ & $\fbox{7}$ & $7$ \\[0.65eM]
  \hline
  \end{tabular}\\
  \vspace*{1cm}
  \begin{tabular}{|c|c|c|c|c|c|c|c|}
  \hline
  & $2$ & $2$ & $2$ & $\fbox{2}$ & $\fbox{2}$ & $\fbox{2}$ & $2$\\[0.64eM]
  \hline
   & $4$ & $4$ & $\fbox{4}$ & $4$ & $4$ & $4$ & $4$ \\[0.65eM]
  \hline
  $\overrightarrow{A_3}$  & $\fbox{7}$ & $7$ & $7$ & $7$ & $7$ & $7$ & $\fbox{7}$\\[0.65eM]
  \hline
  & $7$ & $\fbox{7}$ & $7$ & $7$ & $7$ & $7$ & $7$ \\[0.65eM]
  \hline
  \end{tabular}
\caption{From up down: $\Psi_1=6$, $\Psi_2=4$, $\Psi_3=25$ then $\Psi=4$}\label{bmax}
\end{center}
\end{figure}

\noindent The following theorem provides an algorithm for computing $\Psi_p(S,W)$ in case that input instances $S$ are of type $(1;F,W)$-scenarios, where $W$ is arbitrary weight vector and each utility function in $F$ is concave.

\begin{thm}

\noindent (i) $\Psi_p(S,W)$ is computed by an algorithm of time complexity $\mathcal{O}(nm^2)$, where $S$ is an arbitrary $(1;F,W)$-scenario with $n$ agents and concave utility functions, where $m$ is the total number of items to be allocated.

\noindent (ii) There exists an algorithm of time complexity $\mathcal{O}(n^2m^2)$ which obtains an allocation $B$ such that $\Psi(S,W)={\sf twd}(B)$.

\noindent (iii) Deciding whether a given $(1;F,W)$-scenario $S$ has a WEQ allocation is a polynomial time problem.\label{polypsi}
\end{thm}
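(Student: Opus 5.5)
Fix $p\in[n]$ and, for each $s\in\{0,1,\ldots,m\}$, condition on $A_p$ receiving exactly $x_p=s$ items. Then $b_p=f_p(s)$ is fixed, and the objective becomes
$$\Big(\sum_{i\neq p} w_i\Big) f_p(s) - w_p\sum_{i\ne p} f_i(x_i).$$
Minimizing it is equivalent to maximizing $\sum_{i\neq p} f_i(x_i)$ over nonnegative integers $x_i$ ($i\neq p$) with $\sum_{i\ne p}x_i=m-s$. The constraint $u_i(B_i)/w_i\le u_p(B_p)/w_p$ reads $f_i(x_i)\le \ell_i:=w_i f_p(s)/w_p$ for every $i\neq p$. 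This is exactly a restricted utilitarian problem on the $(1;F',W')$-scenario $S_{p,s}$ obtained by deleting $A_p$, with all weights set to $1$, $m-s$ items, and bounds $\ell=(\ell_i)_{i\ne p}$. Since each $f_i$ is concave, Proposition \ref{restrict-util} and Proposition \ref{r-utilitar} apply: GRUA computes $M_{m-s}U^{\leq \ell}(S_{p,s})$ in time $\mathcal{O}(n(m-s))=\mathcal{O}(nm)$, or reports $-\infty$ if no $\ell$-consistent allocation exists. Then
$$\Psi_p(S,W)=\min_{0\le s\le m,\ M_{m-s}U^{\le\ell}(S_{p,s})\neq-\infty}\Big[\Big(\sum_{i\ne p}w_i\Big)f_p(s)-w_pM_{m-s}U^{\leq \ell}(S_{p,s})\Big].$$

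The correctness of (i) reduces to two checks. First, the feasible allocations in the definition of $\Psi_p$ are partitioned according to the value of $x_p$, and for each value $s$ the set of feasible completions is exactly $\mathcal{C}(S_{p,s},\ell,m-s)$. Second, at least one $s$ is feasible: taking $s=m$ gives $x_i=0$ for all $i\neq p$, which is allowed whenever $f_i(0)\le w_if_p(m)/w_p$. I would handle the degenerate case where even this fails by noting that $\Psi_p=+\infty$ is then harmless for the minimum in Proposition \ref{psik}, because some index $p$ attaining the maximum ratio under any allocation always gives a feasible instance. Running GRUA for all $m+1$ values of $s$ costs $\mathcal{O}(nm^2)$, which proves (i). GRUA also returns the maximizing vector $(x_i)_{i\ne p}$, so together with $x_p=s$ for the best $s$ we obtain an allocation attaining $\Psi_p$.

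For (ii), Proposition \ref{psik} gives $\Psi(S,W)=\min_p\Psi_p(S,W)$. Running (i) for each of the $n$ choices of $p$ and keeping the best allocation costs $\mathcal{O}(n^2m^2)$. The allocation $B$ returned for the minimizing $p$ satisfies ${\sf twd}(B)\le$ the computed objective value. Indeed, if $p$ is not the argmax for $B$, the true ${\sf twd}(B)$ is computed relative to the actual maximizer and can only be no larger by Proposition \ref{psik}'s argument. Conversely, ${\sf twd}(B)\ge\Psi$ by definition, so equality holds.

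For (iii), recall that $\Psi(S,W)=0$ if and only if $S$ admits a WEQ allocation. Hence (ii) decides the question in polynomial time, since we just test whether the computed minimum is zero.

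The main obstacle I expect is that the objective is a difference: for fixed $s$ we must maximize the sum of the others' utilities, and the concavity hypothesis is needed precisely so that GRUA's greedy choice is optimal under the upper bounds $\ell_i$. I would also double-check that the unweighted objective $\sum f_i(x_i)$ matches the form of Proposition \ref{restrict-util} with all weights equal to $1$, and that $f_i(0)\le\ell_i$ is verified before starting GRUA, since the proposition's recursion implicitly assumes that the empty allocation is $\ell$-consistent.
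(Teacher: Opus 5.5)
Your proposal is correct and follows the paper's proof essentially step for step. You fix the number $s$ of items given to the pivot $A_p$, reduce the inner problem to an unweighted restricted utilitarian instance with bounds $\ell_i=w_if_p(s)/w_p$ solved by GRUA in $\mathcal{O}(nm)$, minimize over $s$ for $\mathcal{O}(nm^2)$, and then minimize over $p$ via Proposition~\ref{psik} and test whether $\Psi(S,W)=0$. Your extra checks only tighten points the paper passes over: $f_i(0)\le\ell_i$ must be verified before running GRUA, so $s=m$ need not be feasible. Also, $p$ is automatically a maximizer for the returned allocation, which makes your ``if $p$ is not the argmax'' clause vacuous.
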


\noindent \begin{proof}
To prove $(i)$ first note that $u_p(B_p)$ in the definition of $\Psi_p(S,W)$ is only determined by an integer $t\in [m]$ i.e. $u_p(B_p)=f_p(t)$. Note that $u_p(B_p)\not=0$ otherwise by the definition of $\Psi_p(S,W)$ we have $u_i(B_i)/w_i \leq u_p(0)/w_p=0$. It follows that $\Psi_p(S,W)$ can be expanded as follows. Recall that $B_i=B^{-1}(A_i)$ and $b_i=|B_i|$.
$$\Psi_p(S,W)= \min_{t} \min_{B: \frac{u_i(B_i)}{w_i} \leq \frac{u_p(t)}{w_p}, {\sum}_i b_i=m-t} \big[(\sum_{i\not= p}w_i) f_p(t)- w_p\sum_{i\not= p} u_i(B_i)\big]$$
$$= \min_t \big[(\sum_{i\not= p}w_i) f_p(t) - w_p\max_{B: \frac{u_i(B_i)}{w_i} \leq \frac{u_p(t)}{w_p}, {\sum}_i b_i=m-t} \sum_{i\not= p} u_i(B_i)\big].$$
\noindent For a fixed $t$, consider two terms in the latter expression i.e. $(\sum_{i\not= p}w_i) f_p(t)$ and $$\max_{B: \frac{u_i(B_i)}{w_i} \leq \frac{u_p(t)}{w_p}, {\sum}_i b_i=m-t} \sum_{i\not= p} u_i(B_i).$$
\noindent The first term is computed in time ${\mathcal{O}}(n)$. The second term is an instance of the maximum restricted utilitarian allocation with $\ell_i=(w_i/w_p) f_p(t)$. The algorithm GRUA with the back up of Proposition \ref{r-utilitar} decides by consuming $\mathcal{O}(nm)$ time steps whether the instance admits an $\ell$-consistent allocation or not. We say $t$ is feasible if the corresponding instance admits an $\ell$-consistent allocation. In case that it is feasible, let $Y=Y(t)$ be an allocation of $m-t$ goods to $\{A_1, \ldots, A_n\}\setminus \{A_p\}$ obtained by GRUA such that ${\sum}_i u_i(Y_i)$ is maximum subject to $u_i(Y_i)\leq (w_i/w_p) f_p(t)$ for each $i\not=p$, where $Y_i$ is the bundle offered by $Y$ to $A_i$. Define the following function.

\begin{equation}
\tau(t)=\begin{cases}
({\sum}_{i\not= p}w_i) f_p(t) - w_p{\sum}_{i\not= p} u_i(Y_i) \quad\hspace{0cm} if~t~ is~feasible\\
			
+\infty \quad\hspace{5cm} otherwise.
\end{cases}
\end{equation}

\noindent For a fixed $t$, $\tau(t)$ is computed with time complexity $\mathcal{O}(nm)$. Note also that when $t$ is large enough then $t$ is feasible. For instance for $t=m$ we have to offer nothing to each $A_i$, $i\not=p$. We have $\Psi_p(S,W)={\min}_t \tau(t)$. We conclude that $\Psi_p(S,W)$ is computed by $\mathcal{O}(nm^2)$ comparisons and arithmetic operations. This proves $(i)$.

\noindent Part $(ii)$ is obtained by Proposition \ref{psik}. It implies that $\Psi(S,W)$ is determined by an $\mathcal{O}(n^2m^2)$ algorithm. For $(iii)$ we use the fact that $S$ has a WEQ allocation if and only if $\Psi(S,W)=0$. The proof is obtained by $(ii)$.
\end{proof}

\noindent By the proof of Proposition \ref{polypsi} and algorithm GURA we obtain the following pseudo-code for computing $\Psi_p(S,W)$ for $(1;F,W)$-scenarios $S$.

\noindent {\bf Name:} $\Psi_p(S,W)$
\newline {\bf Input:} A $(1;F,W)$-scenario $S$, where $F=(f_1, \ldots, f_k)$ and $W=(w_1, \ldots, w_n)$
\newline {\bf Output:} An allocation minimizing ${\sf wtd}$ with $A_p$ as pivot, i.e. $\Psi_p(S,W)$

\noindent 1. $\forall i\in [n]$ $x_{i}=\ell_i =\hat{m}=0$\\
\noindent 2. {\bf while} $\hat{m}<m$ {\bf do}\\
\noindent 3. ~~~ $q=\arg \min_{j: x_{j}<m} [f_{p}(x_{j}+1)-f_{p}(x_{j})]$ \\
\noindent 4. ~~~ $x_p \leftarrow x_p+1$\\
\noindent 5. ~~~ $\hat{m} \leftarrow \hat{m}+1$\\
\noindent 6. ~~~ $\ell_i \leftarrow (w_i/w_p)f_p(x_p)$\\
\vspace*{0.3cm}
\noindent 7. ~~~ {\bf while} $\exists i\not=p$: $f_{i}(x_{i}+1) \leq \ell_i$ {\bf do}\\
\noindent 8. ~~~~~~~~ $r=\arg \max_{i: f_i(x_i+1)\leq \ell_i} w_i[f_i(x_i+1)-f_i(x_i)]$\\
\noindent 9. ~~~~~~~~ $x_r \leftarrow x_r+1$\\
\noindent 10. ~~~~~~~ $\hat{m} \leftarrow \hat{m} + 1$\\
\noindent 11. ~~~ {\bf end while}\\
\noindent 12. ~{\bf end while}\\
\noindent 13. {\bf return} $\Psi_p(S,W)= ({\sum}_{i\not=p} w_i) f_p(x_p)-w_p{\sum}_{i\not=p} f_i(x_i)$

\noindent Let $S$ be a general scenario on agent set $\{A_1, \ldots, A_n\}$ having entitlement vector $(w_1, \ldots, w_n)$ and utility function $u_i$ corresponding to $A_i$. Fix an index $k$ corresponding to agent $A_k$ and let $t$ be a positive integer. Define a new scenario $S(w_k,t)$ by adding $t$ new identical goods $c_1, \ldots, c_t$ such that the extended utility function of each $A_i$ (still denoted by $u_i$) over $c_1, \ldots, c_t$ is additive and $u_i(c_j)=1/w_k$, for each $i, j$. In case that $S$ is additive scenario, $S(w_j,t)$ can be represented easily by a table. Figure \ref{augmented} depicts a typical table for a general scenario $S(w_j,t)$. The right part of table shows an allocation of $m$ items to the agents in which $A_i$ has benefited $f_i(x_i)$ utility. The entries specified by boxes in the left part shows the items from $c_1, \ldots, c_t$ which have been offered to each $A_i$. Note that $A_k$ receives no item. In the following proposition we assume that utility functions of the agents take integer values. This is not a serious restriction since assuming that the utilities are rational we can make them integer by replacing each utility $u$ by $\ell u$, where $\ell$ is the least common divisor of all denominations.

\begin{figure}[t]
\setlength{\tabcolsep}{2pt}
\begin{center}
\begin{tabular}{|c|c|c|c|c|c|c|c||c|c|c|c|c|}
  \hline
  $A_1$ & $\frac{1}{w_k}$ & $\frac{1}{w_k}$ & $\frac{1}{w_k}$ & $\frac{1}{w_k}$ & $\textcolor{red}{\fbox{$\frac{1}{w_k}$}}$ & $\textcolor{red}{\fbox{$\frac{1}{w_k}$}}$ & $\textcolor{red}{\fbox{$\frac{1}{w_k}$}}$ & $f_1(x_1)$ & & & & \\[0.95eM]
  \hline
  $A_2$ & $\frac{1}{w_k}$ & $\frac{1}{w_k}$ & $\textcolor{red}{\fbox{$\frac{1}{w_k}$}}$ & $\textcolor{red}{\fbox{$\frac{1}{w_k}$}}$ & $\frac{1}{w_k}$ & $\frac{1}{w_k}$ & $\frac{1}{w_k}$ & & $f_2(x_2)$ & & & \\[0.95eM]
  \hline
  $A_k$ & $\frac{1}{w_k}$ & $\frac{1}{w_k}$ & $\frac{1}{w_k}$ & $\frac{1}{w_k}$ & $\frac{1}{w_k}$ & $\frac{1}{w_k}$ & $\frac{1}{w_k}$ & & & $f_k(x_k)$ & & \\[0.95eM]
  \hline
 $\cdots$ & $\frac{1}{w_k}$ & $\textcolor{red}{\fbox{$\frac{1}{w_k}$}}$ & $\frac{1}{w_k}$ & $\frac{1}{w_k}$ & $\frac{1}{w_k}$ & $\frac{1}{w_k}$ & $\frac{1}{w_k}$ & & & & $~~\cdots~~$ & \\[0.95eM]
  \hline
 $A_n$ & $\textcolor{red}{\fbox{$\frac{1}{w_k}$}}$ & $\frac{1}{w_k}$ & $\frac{1}{w_k}$ & $\frac{1}{w_k}$ & $\frac{1}{w_k}$ & $\frac{1}{w_k}$ & $\frac{1}{w_k}$ & & & & & $f_n(x_n)$\\[0.95eM]
  \hline
  \end{tabular}
\caption{A typical augmented scenario by adding identical goods having value $\frac{1}{w_k}$}\label{augmented}
\end{center}
\end{figure}

\begin{prop}
\noindent Let $S$ be a general scenario on $n$ agents with positive integer entitlements $w_1, \ldots, w_n$. Suppose that utility function of each agent takes integer values.

\noindent (i) Let $k\in [n]$ and $t$ be a smallest integer such that $S(w_k,t)$ admits an WEQ allocation in which $A_k$ receives no items from $\{c_1, \ldots, c_n\}$. Then $t=\Psi_k(S,W)$.

\noindent (ii) Let $t$ be a smallest integer such $S(w_j,t)$ admits an WEQ allocation for some $j$. Then $t=\Psi(S,W)$.\label{aug}
\end{prop}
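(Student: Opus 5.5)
The plan is to make the correspondence between coin allocations and deficit terms completely explicit. Fix $k$ and an allocation $\widehat B$ of $S(w_k,t)$. Write $B$ for its restriction to the original goods, $b_i=u_i(B_i)$ for the original utilities, and $c_i$ for the number of coins $A_i$ receives, so that $\sum_i c_i=t$. Since coins are additive with value $1/w_k$, agent $A_i$ has utility $b_i+c_i/w_k$ in $S(w_k,t)$.

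For part (i), assume $c_k=0$. Then $\widehat B$ is WEQ if and only if $(b_i+c_i/w_k)/w_i=b_k/w_k$ for every $i$. This condition is equivalent to
$$c_i=w_ib_k-w_kb_i\qquad\text{for all } i .$$
Two consequences follow.
\begin{itemize}
\item Since $c_i\geq 0$, every WEQ allocation of this kind has a restriction $B$ satisfying $b_i/w_i\leq b_k/w_k$ for all $i$. So $B$ is admissible in the minimization defining $\Psi_k(S,W)$.
\item Summing over $i$ gives $t=\sum_i c_i=\sum_i (w_ib_k-w_kb_i)$, which is exactly the objective value of $B$ in $\Psi_k(S,W)$. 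Hence $t\geq \Psi_k(S,W)$.
\end{itemize}
For the converse, take an admissible $B$ that attains $\Psi_k(S,W)$. Set $c_i=w_ib_k-w_kb_i$. These numbers are nonnegative by admissibility, integers because weights and utilities are integers, and $c_k=0$. Their sum is $\Psi_k(S,W)$. Distributing that many coins accordingly produces a WEQ allocation of $S(w_k,\Psi_k(S,W))$ in which $A_k$ gets no coin. Therefore the smallest feasible $t$ equals $\Psi_k(S,W)$.

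For part (ii), the upper bound $t\leq \min_j\Psi_j(S,W)=\Psi(S,W)$ follows from (i) together with Proposition~\ref{psik}. For the lower bound, take a WEQ allocation of $S(w_j,t)$ with common weighted value $\lambda$. Then $c_i=w_j(\lambda w_i-b_i)$ for all $i$. Choose $p$ with $b_p/w_p=\max_i b_i/w_i$. Because $c_p\geq0$ we have $\lambda\geq b_p/w_p$, and summing gives
$$t\geq \frac{w_j}{w_p}\sum_i(w_ib_p-w_pb_i)=\frac{w_j}{w_p}\,{\sf twd}(B).$$

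The main obstacle is the factor $w_j/w_p$. When $w_j=w_p$, and in particular in the unweighted case or when the optimal $j$ can be taken to be the pivot $p$, the bound reduces to $t\geq{\sf twd}(B)\geq\Psi(S,W)$, as required. I would first try to show that a minimal $t$ is attained with the coin value tied to the pivot agent, so that $j=p$ and $c_p=0$. This would follow from minimality: if every agent received at least one coin, the common level could be lowered. If that argument fails in general, I would read (ii) as quantifying over pairs in which the coin-free agent is $A_j$, so that the bound reduces directly to (i).
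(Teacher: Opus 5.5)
Your part (i) is correct and follows the paper's argument: with $c_k=0$, WEQ forces $c_i=w_ib_k-w_kb_i$, and summing identifies $t$ with the $\Psi_k$-objective of the restricted allocation. You also make explicit two points the paper leaves implicit: $c_i\ge 0$ is exactly admissibility for $\Psi_k(S,W)$, and integrality is what makes the converse construction work. Your upper bound in (ii) and the inequality $t\ge (w_j/w_p)\,{\sf twd}(B)$ are also correct.

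The gap is in your primary plan for the lower bound in (ii). A minimal $t$ need not be attained with $j=p$ and $c_p=0$. In fact (ii) read literally, with no condition on who receives coins, is false.

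Take agents $A_1,A_2$ with $w_1=1$, $w_2=2$ and one good $g$ with $u_1(g)=u_2(g)=2$.
\begin{itemize}
\item Giving $g$ to $A_1$ has ${\sf twd}=w_2\cdot 2-w_1\cdot 0=4$.
\item Giving $g$ to $A_2$ has ${\sf twd}=w_1\cdot 2-w_2\cdot 0=2$.
\end{itemize}
So $\Psi(S,W)=2$, and $S$ itself has no WEQ allocation, since the weighted utilities are $(2,0)$ or $(0,1)$. In $S(w_1,1)$ the coin is worth $1/w_1=1$. Giving $g$ to $A_2$ and the coin to $A_1$ yields weighted utilities $1/1=2/2=1$, which is WEQ. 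So the smallest $t$ is $1<\Psi(S,W)$. This is exactly your factor at work: with $j=1$, $p=2$, we get $t=(w_j/w_p)\,{\sf twd}(B)=\tfrac12\cdot 2$.

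Minimality can at best produce \emph{some} coin-free agent. It does not even give that via ``every agent has a coin'': the smallest integrality-preserving decrease of the common level removes $w_i/\gcd(w_1,\ldots,w_n)$ coins from $A_i$, not one. More fundamentally, the coin value $1/w_j$ is fixed by $j$, and a light $A_j$ makes coins cheap relative to a heavy pivot.

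So your fallback is forced, not optional. Statement (ii) needs the same proviso as (i): $A_j$ receives no coin. Without it the statement can fail, though it does hold for equal weights, where your bound already gives $t\ge{\sf twd}(B)\ge\Psi(S,W)$. With the proviso, (i) gives $t\ge\Psi_j(S,W)\ge\Psi(S,W)$ for every $j$, and together with your upper bound, $t=\Psi(S,W)$.

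This is precisely the paper's one-line proof of (ii) from part (i) and $\Psi(S,W)=\min_p\Psi_p(S,W)$, which tacitly assumes the same proviso. State the corrected hypothesis explicitly rather than pursuing the $j=p$ reduction.
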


\noindent \begin{proof}
The goal is to minimize ${\sum}_i y_i$ such that if we offer $y_i$ (new) items to $A_i$ then the resulting allocation is EQ. For this purpose we should have $(u_i+(y_i/w_k))/w_i = u_k/w_k$ or equivalently $y_i=u_kw_i-u_iw_k$, for each $i$. By the assumptions each $y_i$ is integer. Hence, the goal is to minimize ${\sum}_i (u_kw_i-u_iw_k)$. The latter problem is equivalent to determining $\Psi_k(S,W)$. Proof of $(ii)$ is easily obtained by part $(i)$ and Proposition \ref{psik}.
\end{proof}\\

\begin{figure}[t]
\begin{center}
  \begin{tabular}{|c|c|c|c|c||c|c|c|c|c|c|c|}
  \hline
  & $c_1$ & $c_2$ & $c_3$ & $c_4$ & $g_1$ & $g_2$ & $g_3$ & $g_4$ & $g_5$ & $g_6$ & $g_7$ \\[0.5eM]
  \hline
  $A_1$ & $1$ & $1$ & $\textcolor{red}{\fbox{1}}$ & $\textcolor{red}{\fbox{1}}$ & $2$ & $\fbox{2}$ & $\fbox{2}$ & $2$ & $2$ & $2$ & $\fbox{2}$ \\[0.5eM]
  \hline
  $A_2$ & $1$ & $1$ & $1$ & $1$ & $\fbox{4}$ & $4$ & $4$ & $\fbox{4}$ & $4$ & $4$ & $4$ \\[0.5eM]
  \hline
  $A_3$ & $1$ & $\textcolor{red}{\fbox{1}}$ & $1$ & $1$ & $7$ & $7$ & $7$ & $7$ & $\fbox{7}$ & $7$ & $7$\\[0.5eM]
  \hline
  $A_4$ & $\textcolor{red}{\fbox{1}}$ & $1$ & $1$ & $1$ & $7$ & $7$ & $7$ & $7$ & $7$ & $\fbox{7}$ & $7$ \\[0.5eM]
  \hline
  \end{tabular}
\caption{Four coins are enough to obtain WEQ allocation since $\Psi=\Psi_2=4$}\label{coin}
\end{center}
\end{figure}

\noindent Figure \ref{coin} illustrates how Proposition \ref{aug} works. First consider the scenario $S$ in Figure \ref{bmax} and the allocation which provides $\Psi_2(S)=4$. It has $7$ goods with four agents. Utility table of scenario $S(w_2,4)$ is depicted in Figure \ref{coin}. This table provides a coin-offering procedure to obtain an equitable (in general setting a WEQ) allocation by only four coins (in general $\Psi_k(S,W)$ coins) of value $1/w_k$. Note that in the figure, $k=2$, $w_2=1$ and $\Psi_2(S)=4$. The general coin-offering procedure is as follows. Theorem \ref{polypsi} computes in polynomial time an allocation say $B$ which provides utility $u_i$ for each agent $A_i$ and such that for some $k$, ${\sum}_i (u_kw_i-u_iw_k)=\Psi(S,W)$. Now, for each $i\not= k$, Agent $A_k$ (or an external source) offers $u_kw_i-u_iw_k$ identical coins of value $1/w_k$ to $A_i$. The resulting allocation is WEQ by Proposition \ref{aug} and uses the minimum number of coins of value $1/w_k$ for this purpose.


\noindent At the end of the paper, we generalize Theorem \ref{polypsi} for $(k;W,F)$-scenarios. We need a discussion. Let $S$ be a $(k;W,F)$-scenario with $m_j$ identical copies of type $g_j$, $j\in [k]$. Let also $S_j$ be a scenario obtained by restricting $S$ to $m_j$ copies of $g_j$. An allocation for $S$ clearly induces an allocation for each $S_j$. We assume in Proposition \ref{k-polypsi} that for each $i\not=j$, a non-equitable allocation of $m_i$ copies of $g_i$ can be compensated by a suitable allocation of $m_j$ copies of $g_j$. That is in an allocation $X$ of all goods in $S$, allocation of $m_i$ copies of $g_i$ can be balanced by the allocation of $m_j$ copies of $g_j$ (under $X$) and vice versa. Then we can define ${\sf twd}(X)$ as ${\sf twd}(X)=({\sum}_{i\not= p} w_i)X_p- w_p({\sum}_{i\not= p} X_i)$, where $w_i$ is an entitlement of $A_i$ in the whole scenario. Otherwise, $S_1, \ldots, S_k$ are independent and a WEQ allocation for $S$ should be obtained by concatenating WEQ allocations for $S_1, \ldots, S_k$ (each obtained by Theorem \ref{polypsi}).

\begin{prop}
Let $k\geq 2$ be a fixed integer and $S$ be a $(k;W,F)$-scenario on $n$ agents in which an agent $A_i$ has entitlement $w_i$. Let $\frak{m}=(m_1, \ldots, m_k)$ and $m={\sum}_j m_j$ be total number of items to be allocated. Define $\Omega={\max}_i {\sum}_j f_{ij}(m_j)$. Assume that $\Omega$ grows polynomially in $m$ if $m\rightarrow \infty$. Then $\Psi_p(S,W)$ and $\Psi(S,W)$ are computed by polynomial time algorithms in terms of $m+n$. Deciding whether a given $(k;F,W)$-scenario $S$ has a WEQ allocation has a similar time complexity.\label{k-polypsi}
\end{prop}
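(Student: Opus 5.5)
We follow the proof of Theorem \ref{polypsi}. The agent utilities are $u_i(X)={\sum}_j f_{ij}(x_{ij})$, and the minimization is over all allocations $X=[x_{ij}]$ with ${\sum}_i x_{ij}=m_j$. As in Theorem \ref{polypsi} we fix the pivot $p$ and note that
$$\Psi_p(S,W)=\min_{v}\Big[\big(\sum_{i\neq p}w_i\big)v - w_p\,\max\sum_{i\neq p}u_i(X)\Big].$$
Here $v$ ranges over the achievable values $u_p(X)={\sum}_j f_{pj}(x_{pj})$. The inner maximum is over allocations of the remaining copies to $A_i$, $i\neq p$, subject to the caps $u_i(X)\le (w_i/w_p)v$. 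Two things must be controlled: the number of candidate values $v$, and the inner restricted maximization. We assume, as in Proposition \ref{aug}, that the $f_{ij}$ take integer values. Then every $u_i(X)$ is an integer in $[0,\Omega]$, and by hypothesis $\Omega$ is bounded by a polynomial in $m$.

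The enumeration over the pivot is direct. Since $k$ is fixed, the pivot's bundle $(x_{p1},\dots,x_{pk})$ ranges over at most $\prod_j(m_j+1)\le (m+1)^k$ vectors, which is polynomial. For each such vector we set $v={\sum}_j f_{pj}(x_{pj})$ and the residual supplies $m_j'=m_j-x_{pj}$.

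The inner problem is to maximize ${\sum}_{i\neq p}u_i$ subject to $u_i\le \ell_i$ with $\ell_i=(w_i/w_p)v$. The greedy GRUA of Proposition \ref{r-utilitar} no longer works with $k$ coupled types. Instead we use a dynamic program over agents, exploiting integrality and the bound $\Omega$. Process the agents $i\neq p$ one at a time. The state is the vector of copies used so far, $(c_1,\dots,c_k)$ with $c_j\le m_j'$, giving at most $(m+1)^k$ states. From each state we branch over agent $i$'s bundle vector $(x_{i1},\dots,x_{ik})$, again at most $(m+1)^k$ choices. A choice is admissible iff ${\sum}_j f_{ij}(x_{ij})\le \ell_i$, and the DP keeps the maximum accumulated utility. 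A final state must satisfy $c_j=m_j'$ for every $j$; if none does, the value of $v$ is infeasible and contributes $+\infty$. Concavity is not even needed here. The running time is $\mathcal{O}(n(m+1)^{2k})$ per pivot bundle.

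The role of $\Omega$ enters in an alternative formulation, which I would present if a more economical count is wanted. Enumerate the pivot value $v\in\{0,\dots,\Omega\}$ rather than the pivot bundle, and for each $v$ ask, via the same DP, whether some pivot bundle attains exactly $v$. Either way the total count is polynomial in $m+n$: $(m+1)^{k}\cdot n(m+1)^{2k}$ in the first version, or $\Omega\cdot n(m+1)^{2k}$ in the second. Taking the minimum over $p$ via Proposition \ref{psik} gives $\Psi(S,W)$ with a further factor $n$. Deciding WEQ amounts to testing $\Psi(S,W)=0$.

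The main obstacle is justifying that the inner problem is tractable without the greedy exchange argument of Proposition \ref{recursive-util}. With $k\ge2$ types the caps couple the types per agent, and greedy choices can fail. The DP over residual supply vectors sidesteps this, and it is exactly where the fixedness of $k$ is essential. The hypothesis on $\Omega$ is used to keep the number of candidate pivot values, and the size of the utility numbers manipulated, polynomial. Some care is also needed in the interpretation of ${\sf twd}$ for $(k;W,F)$-scenarios stated before the proposition, namely that the utilities $X_i$ are summed across types. I would state explicitly that the proof uses $u_i(X)={\sum}_j f_{ij}(x_{ij})$ as the utility.
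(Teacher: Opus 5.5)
Your proof is correct, and it departs from the paper at the one step that matters. The outer enumeration is the same as the paper's: it also fixes the pivot's bundle $(i_1,\ldots,i_k)$, with at most $\prod_j(m_j+1)$ choices. The difference is in the capped inner problem $\max\sum_{i\neq p}u_i(X_i)$. The paper does not use a DP over supply vectors. Instead it proceeds as follows:
\begin{itemize}
\item it splits the pivot's level $\sigma=u_p/w_p$ into per-type budgets $r_1,\ldots,r_k$ with $\sum_j r_j\le\sigma$;
\item it caps every agent's type-$j$ utility by $w_ir_j$;
\item it solves each single-type problem $M_{m_j-i_j}U^{\le\rho_j}(S_j)$ by GRUA;
\item it maximizes over integer $(r_1,\ldots,r_k)$ with $r_j\le\Omega/w_p$.
\end{itemize}
That enumeration is where the paper uses the hypothesis on $\Omega$ (the factor $[\Omega/w_p+1]^k$) and integrality. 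The appeal to GRUA also tacitly needs concave $f_{ij}$, which the statement does not assume.

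Your DP needs none of these assumptions. In your first version the cost is $\mathcal{O}(n(m+1)^{3k})$ per pivot index $p$. This is polynomial for fixed $k$ whatever $\Omega$ is, and the DP only compares real-valued sums. So you can drop the integrality assumption and the remark that the bound on $\Omega$ is needed; in the paper's unit-cost model neither plays any role in your argument.

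Your route is also exact. Because the paper uses the same $r_j$ for every agent, it cannot represent allocations in which different agents spend their caps on different types. Take unit weights, $\sigma=2$, and two non-pivot agents with $f_{a1}(x)=x$, $f_{a2}(x)=100x$, $f_{b1}(x)=100x$, $f_{b2}(x)=x$. Suppose two copies of each type remain. Giving both type-$1$ copies to $a$ and both type-$2$ copies to $b$ is feasible, with value $4$. In the split formulation, however, type $1$ forces $b$ to receive nothing and hence $r_1\ge 2$, and symmetrically $r_2\ge 2$. This contradicts $r_1+r_2\le 2$, so the right-hand side is $-\infty$. Hence the identity $\clubsuit$ in the paper's proof holds in general only as an inequality, even for linear (so concave) utilities. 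Your DP over residual supply vectors is what actually makes the inner step rigorous.
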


\noindent \begin{proof}
To prove the first assertion note that $u_p(X_p)$ in the definition of $\Psi_p(S,W)$ is only determined by a $k$-tuple $(i_1, \ldots, i_k)$, where $i_j\in [m_j]$ for each $j$. Recall that $X_i=X^{-1}(A_i)$. We have $u_p(X_p)={\sum}_j f_{pj}(i_j)$. Define $\sigma(i_1, \ldots, i_k)=({\sum}_j f_{pj}(i_j))/w_p$.
It follows that $\Psi_p(S,W)$ can be expanded as follows.
$$\Psi_p(S,W)= \min_{(i_1, \ldots, i_k)} \min_{X: \frac{u_i(X_i)}{w_i} \leq \sigma(i_1, \ldots, i_k), {\sum}_{i\not= p} x_{ij}=m_j-i_j} \big[(\sum_{i\not= p}w_i) {\sum}_j f_{pj}(i_j)- w_p\sum_{i\not= p} u_i(X_i)\big]$$
$$= \min_{(i_1, \ldots, i_k)} \big[(\sum_{i\not= p}w_i) {\sum}_j f_{pj}(i_j) - w_p\max_{X: \frac{u_i(X_i)}{w_i} \leq \sigma(i_1, \ldots, i_k), {\sum}_{i\not= p} x_{ij}=m_j-i_j} \sum_{i\not= p} u_i(X_i)\big].$$
\noindent For a fixed $(i_1, \ldots, i_k)$, $({\sum}_{i\not= p}w_i) {\sum}_j f_{pj}(i_j)$ is computed by  $n+k$ arithmetic operations. For an arbitrary $k$ values $r_1, \ldots, r_k$ with $0\leq r_j \leq \sigma$ such that ${\sum}_j r_j\leq \sigma$, define $\rho_i=(w_1r_i, w_2r_i, \ldots, w_nr_i)$, $i\in [k]$. We have following expansion for the big max term in the latter relation.
$$\max_{X: \frac{u_i(X_i)}{w_i} \leq \sigma(i_1, \ldots, i_k), {\sum}_{i\not= p} x_{ij}=m_j-i_j} \sum_{i\not= p} u_i(X_i)=$$
$$\max_{(r_1, \ldots, r_k): 0\leq r_j \leq \sigma} \big[M_{m_1-i_1}U^{\leq \rho_1}(S_1)+M_{m_2-i_2}U^{\leq \rho_2}(S_2)+\cdots+M_{m_k-i_k}U^{\leq \rho_k}(S_k)\big]~~~~~~\clubsuit$$
\noindent Then a $k$-tuple $(i_1, \ldots, i_k)$ is feasible if and only if there exists $(r_1, \ldots, r_k)$ such that $M_{m_j-i_j}U^{\leq \rho_j}(S_j)\not= -\infty$ for each $j\in [k]$. Total number of $k$-tuples $(i_1, \ldots, i_k)$ is ${\prod}_{j=1}^k (m_j+1)\leq (m/k +1)^k$. Note also that for every $(i_1, \ldots, i_k)$, $\sigma(i_1, \ldots, i_k)\leq ({\sum}_j f_{pj}(m_j))/w_p$. For each $j\in [k]$ and each $r\in \{0, 1, \ldots, (1/w_p){\sum}_j f_{pj}(m_j)\}$, Theorem \ref{polypsi} computes $M_{m_j-i_j}U^{\leq \rho}(S_j)$ with time complexity $nm_j^2$, where $\rho=(w_1r, \ldots, w_nr)$. Hence complexity of the maximum term in $\clubsuit$ is $n{\sum}_j m_j^2[\Omega/w_p +1]$. It follows that $\Psi_p(S,W)$ is computed by time complexity
$$(n+k)(m/k +1)^k+n(m/k +1)^k({\sum}_j m_j^2)[\frac{\Omega}{w_p}+1]^k\leq (m/k +1)^k(n+k+nm^2[\frac{\Omega}{w_p}+1]^k).$$
\noindent The rest of assertions are obtained similar to the proof of Theorem \ref{polypsi}.
\end{proof}

\end{document}